% !TeX encoding = UTF-8
% !TeX TS-program = pdflatex
% !TeX spellcheck = en_GB
\documentclass[a4paper,reqno]{amsart}

\usepackage{textcase, cmap, amsmath, amsfonts, amsthm, bbm, amssymb, mathtools, indentfirst, multirow, multicol, enumerate, tasks, tikz, esint, graphicx}

\usetikzlibrary{calc}
\usepackage[many]{tcolorbox}
\usepackage[unicode]{hyperref}

\PassOptionsToPackage{alphabetic}{amsrefs}
\usepackage{amssymb, amsthm, amsfonts, amsmath, mathrsfs, latexsym, pstricks-add, amsrefs, graphicx, tikz, thmtools, enumitem}
\usepackage[mathscr]{euscript}
\usepackage{lmodern}
\usepackage[utf8]{inputenc}
\usepackage[T1]{fontenc}
\usepackage{microtype}

\definecolor{Maroon}{RGB}{140,10,0}

\hypersetup{
	colorlinks,
	linkcolor={Maroon},
	citecolor={Maroon},
	urlcolor={Maroon}
}

\newtheorem{theorem}{Theorem}
\newtheorem{lemma}[theorem]{Lemma}
\newtheorem{corollary}[theorem]{Corollary}

\theoremstyle{definition}

\newtheorem{remark}[theorem]{Remark}

%%%%%%%% USER DERFINED COMMANDS %%%%%%%%

\newcommand{\di}{\displaystyle}

\newcommand{\N}{\mathbb N}
\newcommand{\Z}{\mathbb Z}

\newcommand{\R}{\mathbb R}

 %%%% dx za integrale
 %%%%% derivacija kao razlomak
%%%%%OLD%%%%\newcommand{\dif}[2][]{\frac{\mathrm{d}^{#1}}{\mathrm{d}{#2}^{#1}}} %%%%% derivacija kao razlomak

\let\ge\geqslant
\let\geq\geqslant
\let\le\leqslant
\let\leq\leqslant
\let\epsilon\varepsilon

\graphicspath{{./figs/}}

\begin{document}
	\title[Mean value theorem and Ostrowski inequality for $q$-calculus]{Mean value theorem for quantum integral operator with application to sharp Ostrowski inequality}
	
	\author[A.~Aglić~Aljinović]{Andrea Aglić Aljinović}
	\address{University of Zagreb, Faculty of Electrical Engineering and Computing, Unska 3, 10 000 Zagreb, Croatia}
	\email{andrea.aglic@fer.hr}
	
	\author[D.~Kovačević]{Domagoj Kovačević}
	\address{University of Zagreb, Faculty of Electrical Engineering and Computing, Unska 3, 10 000 Zagreb, Croatia}
	\email{domagoj.kovacevic@fer.hr}
	
	\author[M.~Puljiz]{Mate Puljiz}
	\address{University of Zagreb, Faculty of Electrical Engineering and Computing, Unska 3, 10 000 Zagreb, Croatia}
	\email{mate.puljiz@fer.hr}
	
	\author[A.~Žgaljić~Keko]{Ana Žgaljić Keko}
	\address{University of Zagreb, Faculty of Electrical Engineering and Computing, Unska 3, 10 000 Zagreb, Croatia}
	\email{ana.zgaljic@fer.hr}
	
	\date{\today}
	\subjclass[2010]{05A30, 26D10, 26D15}
	\keywords{q-derivative, q-integral, midpoint inequality, Ostrowski inequality}
	
	\begin{abstract}
		We derive a version of Lagrange's mean value theorem for quantum calculus.
		We disprove a version of Ostrowski inequality for quantum calculus appearing in the literature.
		We derive a correct statement and prove that our new inequality is sharp.
		We also derive a midpoint inequality.
	\end{abstract}

	\maketitle
	
	\section{Introduction}
	Quantum calculus is calculus based on finite difference principle without the concept of limits. It has two main branches $q$-calculus and $h$-calculus. While $h$-calculus has important applications for ordinary and partial differential equations, optimization theory and in applied fields such as physics, engineering and economics (see \cites{AFC,FOP,NLFDE}), $q$-calculus has applications in number theory, combinatorics, fractals, approximation theory, numerical analysis, ordinary and partial difference equations, dynamical systems, quantum groups, quantum algebras, Lie algebras, complex analysis, computer science, particle physics and quantum mechanics (see \cites{IIQC,FitouhiBrahim,IMCA,qCalcComprehensive,HighOrderSingularBVP,NoorAwan,BPST,MezliniOuled,AhasanMursaleen}).

	\medskip
	
	The well known \textbf{Ostrowski inequality} gives an estimate of the difference of
	function values and its integral mean on a segment
	\cite{OSTR}:%
	\begin{equation}
	\left\vert f(x)-\frac{1}{b-a}\int_{a}^{b}f(t)dt\right\vert \leq\left[
	\frac{1}{4}+\frac{\left(  x-\frac{a+b}{2}\right)  ^{2}}{\left(  b-a\right)
		^{2}}\right]  \left(  b-a\right)  \left\Vert f^{\prime}\right\Vert _{\infty}.
	\label{OST}%
	\end{equation}
	It holds for every $x\in\left[  a,b\right]  $ whenever $f:\left[  a,b\right]
	\rightarrow%
	%TCIMACRO{\U{211d} }%
	%BeginExpansion
	\mathbb{R}
	%EndExpansion
	$\ is continuous on $\left[  a,b\right]  $\ and differentiable on $\left(
	a,b\right)  $\ with derivative $f^{\prime}:\left(  a,b\right)  \rightarrow%
	%TCIMACRO{\U{211d} }%
	%BeginExpansion
	\mathbb{R}
	%EndExpansion
	$\ bounded on $\left(  a,b\right)  $\ i.e.%
	\[
	\left\Vert f^{\prime}\right\Vert _{\infty}:=\sup_{t\in\left(  a,b\right)
	}\left\vert f^{\prime}\left(  t\right)  \right\vert <+\infty.
	\]
	
	In a published paper \cite{TARI2}, the following version of Ostrowski inequality was obtained for quantum calculus.
	
	\begin{theorem}[Incorrect result {\cite{TARI2}*{Theorem 3.5}}]
		Let $f:\left[  a,b\right]  \rightarrow%
		%TCIMACRO{\U{211d} }%
		%BeginExpansion
		\mathbb{R}
		%EndExpansion
		$ be a $q$-differentiable function with $D_{q}^{a}f$ continuous on $[a,b]$ and
		$0<q<1$. Then we have%
		\begin{align}
		&  \left\vert f(x)-\frac{1}{b-a}%
		%TCIMACRO{\dint \limits_{a}^{b}}%
		%BeginExpansion
		{\displaystyle\int\limits_{a}^{b}}
		%EndExpansion
		f\left(  t\right)  d_{q}^{a}t\right\vert \nonumber\\
		&  \leq\left[  \frac{2q}{1+q}\left(  \frac{x-\frac{\left(  3q-1\right)
				a+\left(  1+q\right)  b}{4q}}{b-a}\right)  ^{2}+\frac{-q^{2}+6q-1}{8q\left(
			1+q\right)  }\right]  \left(  b-a\right)  \left\Vert D_{q}^{a}f\right\Vert
		_{\infty}. \label{OST2}%
		\end{align}
		
	\end{theorem}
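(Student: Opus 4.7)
Since the statement is flagged as an \emph{incorrect result}, the task is really to refute \eqref{OST2} by producing an explicit counterexample. My strategy is: first, analyze the right-hand side structurally to locate a ``suspicious'' regime of parameters; second, test the simplest polynomial functions in that regime.

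Viewed as a function of $x$, the bracket in \eqref{OST2} is a parabola with vertex $x_0 = \frac{(3q-1)a+(1+q)b}{4q}$ and minimum value $M(q) := \frac{-q^2+6q-1}{8q(1+q)}$. Two features stand out. First, $M(q) < 0$ as soon as $q < 3-2\sqrt{2} \approx 0.172$. Second, $x_0 - b = \frac{(1-3q)(b-a)}{4q} > 0$ whenever $q < 1/3$; so in the regime of interest the vertex lies \emph{outside} $[a,b]$ and the bracket is strictly decreasing on $[a,b]$. This behaviour is atypical for a sharp Ostrowski-type bound, so I would expect violations for small $q$.

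Next I would test monomials on $[a,b]=[0,1]$. For $f(t)=t$ one finds $\|D_q^0 f\|_\infty = 1$ and $\frac{1}{b-a}\int_0^1 t\,d_q^0 t = \frac{1}{1+q}$, and a short computation shows \eqref{OST2} is actually an \emph{equality} at $x=0$ and $x=1$ (with slack in between); the linear test function cannot refute it. The next candidate is $f(t)=t^2$, for which
\[
D_q^0 f(t) = (1+q)t, \qquad \|D_q^0 f\|_\infty = 1+q, \qquad \int_0^1 t^2\,d_q^0 t = \frac{1}{1+q+q^2}.
\]
Choosing $q=\tfrac{1}{10}$ and $x=\tfrac{1}{2}$, an elementary computation in fractions gives a left-hand side of $\bigl|\tfrac14 - \tfrac{100}{111}\bigr| = \tfrac{289}{444} \approx 0.651$ and a right-hand side of exactly $\tfrac{1}{2}$; since $\tfrac{289}{444} > \tfrac{1}{2}$, the inequality \eqref{OST2} fails for this data, disproving the theorem.

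The main obstacle is knowing where to look --- without the preliminary analysis of the right-hand side one could easily waste effort probing the wrong regime. A related pitfall is that as $q \to 1^-$ the inequality \eqref{OST2} formally reduces to the classical Ostrowski bound \eqref{OST}, which does hold; so any counterexample must genuinely exploit the $q$-deformation and must use $q$ well away from $1$. Beyond producing one explicit instance, it would be informative to track which step in the original proof of \cite{TARI2}*{Theorem 3.5} breaks --- presumably an estimate that is valid only under an implicit symmetry that fails once $q\neq 1$.
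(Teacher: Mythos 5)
Your refutation is correct, and it is verifiable as stated: for $f(t)=t^2$ on $[0,1]$ with $q=\tfrac{1}{10}$ one has $D_qf(t)=(1+q)t$, $\|D_qf\|_\infty=\tfrac{11}{10}$, $\int_0^1t^2\,d_qt=\tfrac{100}{111}$, and at $x=\tfrac12$ the left-hand side of \eqref{OST2} equals $\tfrac{289}{444}\approx 0.651$ while the right-hand side equals exactly $\tfrac12$; moreover $\tfrac12$ is not of the form $q^m$, so your example is consistent with the fact (Theorem \ref{tm:OSTqNet} together with Remark \ref{rem:protuprimjer}) that \eqref{OST2} does hold at lattice points. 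The paper refutes the statement in the same way, by an explicit counterexample, but with a different witness and a different underlying mechanism: it takes $q=\tfrac12$ and a piecewise linear $f$ equal to $t$ on $[0,\tfrac{9}{10}]$ and dropping steeply on $[\tfrac{9}{10},1]$, evaluated at the off-lattice point $x=\tfrac{9}{10}$. That construction exposes the structural reason for the failure --- the $q$-integral samples $f$ only at the lattice points $q^k$, so $f$ can be pushed far from its integral mean at any off-lattice $x$ without the integral noticing --- and it points directly to the correct replacement bound of Theorem \ref{tm:our_ostrowski}. Your example instead exploits the degeneration of the claimed bound for small $q$ (negative minimum of the bracket, vertex outside $[a,b]$) and has the virtue of using a smooth monomial satisfying all the stated hypotheses, though it does not by itself suggest what the true bound is. One caveat on your closing remarks: it is not the case that a counterexample ``must use $q$ well away from $1$''. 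Since the sharp off-lattice bound $(b-a)\bigl(\tfrac{x-a}{b-a}+\tfrac{1}{1+q}\bigr)\|D^a_qf\|_\infty$ exceeds the right-hand side of \eqref{OST2} by the strictly positive amount $2(x-a)\bigl(1-\tfrac{q(x-a)}{(1+q)(b-a)}\bigr)\|D^a_qf\|_\infty$ at every off-lattice $x$, and is approached arbitrarily closely by $q$-differentiable functions, inequality \eqref{OST2} fails for \emph{every} fixed $q\in\langle 0,1\rangle$ at every $x$ not on the $q$-lattice.
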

	
	As we shall see later, this inequality does hold for some $x\in[a,b]$ but not all as claimed.
	The proof given in \cite{TARI2} uses the standard Lagrange's mean value theorem, which, as we show in Section \ref{sec:MVT}, does not hold in quantum calculus. In Section \ref{sec:OstrowskiOnLattice} we give a counterexample to \eqref{OST2} and in Section \ref{sec:OstrowskiFull} we derive a correct version of this inequality that holds for all $x\in[a,b]$.
	
	\medskip
	
	The rest of the paper is organised as follows. In Section \ref{sec:prelim} we give preliminaries for quantum calculus. In Section \ref{sec:MVT} we obtain Lagrange's mean value theorem for quantum calculus and use it in Section \ref{sec:OstrowskiOnLattice} to
	obtain a sharp bound for $q$-Ostrowski inequality for $x=a+q^{m}\left(
	b-a\right)  $, $m\in%
	%TCIMACRO{\U{2115} }%
	%BeginExpansion
	\mathbb{N}
	%EndExpansion
	\cup\left\{  0\right\}  $. Finally, in Section \ref{sec:OstrowskiFull}, we obtain Ostrowski inequality
	for all possible values $x\in [a,b]$ and show that our bound is	optimal.
	
	\section{\texorpdfstring{$q$}{q}-calculus preliminaries}\label{sec:prelim}
	
	F. H. Jackson \cite{JACK} in 1908 has defined what is now known as \textbf{Euler-Jackson
		$q$-difference operator} ($q$-derivative of the function) by%
	$$
	D_{q}f\left(  x\right)   =\frac{f\left(  x\right)  -f\left(  qx\right)
	}{\left(  1-q\right)  x},\text{ }x\in\left\langle 0,b\right]  ,\text{\ }%
	q\in\left\langle 0,1\right\rangle
	$$
	for an arbitrary function $f:\left[  0,b\right]  \rightarrow%
	%TCIMACRO{\U{211d} }%
	%BeginExpansion
	\mathbb{R}
	%EndExpansion
	$, where $b>0$. Note that every such function is
	$q$-differentiable for every $x\in\left\langle 0,b\right]$. When $\underset{x\rightarrow0}{\lim}D_{q}f\left(  x\right)$ exists it is said that $f$ is $q$-differentiable on $\left[  0,b\right]  $ and 
	$$
	D_{q}f\left(  0\right)   =\underset{x\rightarrow0}{\lim}D_{q}f\left(x\right).
	$$
	
	The $q$-derivative is a
	discretization of ordinary derivative and if $f$ is differentiable function
	then
	\[
	\underset{q\rightarrow1}{\lim}D_{q}f\left(  x\right)  =f^{\prime}\left(
	x\right)  .
	\]

	F. H. Jackson \cite{JACK2} in 1910 has also defined \textbf{$q$-integral} (or
	Jackson integral) by
	\begin{equation*}%
	%TCIMACRO{\dint \limits_{0}^{x}}%
	%BeginExpansion
	{\displaystyle\int\limits_{0}^{x}}
	%EndExpansion
	f\left(  t\right)  d_{q}t=\left(  1-q\right)  x%
	%TCIMACRO{\dsum \limits_{k=0}^{\infty}}%
	%BeginExpansion
	{\displaystyle\sum\limits_{k=0}^{\infty}}
	%EndExpansion
	q^{k}f\left(  q^{k}x\right)  ,\text{ \ }x\in\left\langle 0,b\right].
	\end{equation*}
	If the series on the right hand side is convergent, then $q$-integral $%
	%TCIMACRO{\dint _{0}^{x}}%
	%BeginExpansion
	{\displaystyle\int_{0}^{x}}
	%EndExpansion
	f\left(  t\right)  d_{q}t$ exists. If $f$ is continuous on $\left[
	0,b\right]  $ as $q\rightarrow1$ the series $\left(  1-q\right)  x%
	%TCIMACRO{\dsum \limits_{k=0}^{\infty}}%
	%BeginExpansion
	{\displaystyle\sum\limits_{k=0}^{\infty}}
	%EndExpansion
	q^{k}f\left(  q^{k}x\right)  $ tends to the Riemann integral (\cite{ANNA},
	\cite{KAC})
	\[
	\underset{q\rightarrow1}{\lim}%
	%TCIMACRO{\dint \limits_{0}^{x}}%
	%BeginExpansion
	{\displaystyle\int\limits_{0}^{x}}
	%EndExpansion
	f\left(  t\right)  d_{q}t=%
	%TCIMACRO{\dint \limits_{0}^{x}}%
	%BeginExpansion
	{\displaystyle\int\limits_{0}^{x}}
	%EndExpansion
	f\left(  t\right)  dt
	\]

	Previous definitions and results for $f:\left[  0,b\right]  \rightarrow%
	%TCIMACRO{\U{211d} }%
	%BeginExpansion
	\mathbb{R}
	%EndExpansion
	$ can easily be generalized for $f:\left[  a,b\right]  \rightarrow%
	%TCIMACRO{\U{211d} }%
	%BeginExpansion
	\mathbb{R}
	%EndExpansion
	$ (see \cite{TARI}). 
	If we have a function $f:\left[  a,b\right]  \rightarrow%
	%TCIMACRO{\U{211d} }%
	%BeginExpansion
	\mathbb{R}
	%EndExpansion
	$ then ``shifted'' $q$-derivative for $q\in\left\langle 0,1\right\rangle $ can be
	defined as
	$$
	D_{q}^{a}f\left(  x\right)  =\frac{f\left(  x\right)  -f\left(  a+q\left(
		x-a\right)  \right)  }{\left(  1-q\right)  \left(  x-a\right)  }, \quad \text{ if
	}x\in\left\langle a,b\right] ,
	$$
	If $\underset{x\rightarrow a}{\lim}D_{q}^{a}f\left(  x\right)  $ exists,
	$f:\left[  a,b\right]  \rightarrow \mathbb{R}$ is said to be $q$-differentiable on $[a,b]$ and
	$$D_{q}^{a}f\left(  a\right)  =\underset{x\rightarrow a}{\lim}D_{q}^{a}f\left(  x\right).
	$$
	``Shifted'' $q$-integral is defined by
	\[%
	%TCIMACRO{\dint \limits_{a}^{x}}%
	%BeginExpansion
	{\displaystyle\int\limits_{a}^{x}}
	%EndExpansion
	f\left(  t\right)  d_{q}^{a}t=\left(  1-q\right)  \left(  x-a\right)
	%TCIMACRO{\dsum \limits_{k=0}^{\infty}}%
	%BeginExpansion
	{\displaystyle\sum\limits_{k=0}^{\infty}}
	%EndExpansion
	q^{k}f\left(  a+q^{k}\left(  x-a\right)  \right)  ,\text{ \ }x\in\left[
	a,b\right]  .
	\]
	If the series on the right hand side is convergent, then $q$-integral $%
	%TCIMACRO{\dint _{a}^{x}}%
	%BeginExpansion
	{\displaystyle\int_{a}^{x}}
	%EndExpansion
	f\left(  t\right)  d_{q}^{a}t$ exists and $f:\left[  a,b\right]  \rightarrow%
	%TCIMACRO{\U{211d} }%
	%BeginExpansion
	\mathbb{R}
	%EndExpansion
	$ is $q$-integrable on $\left[  a,x\right]  $. If $c\in\left\langle
	a,x\right\rangle $ $q$-integral over $[c,x]$ is defined by
	\[%
	%TCIMACRO{\dint \limits_{c}^{x}}%
	%BeginExpansion
	{\displaystyle\int\limits_{c}^{x}}
	%EndExpansion
	f\left(  t\right)  d_{q}^{a}t=%
	%TCIMACRO{\dint \limits_{a}^{x}}%
	%BeginExpansion
	{\displaystyle\int\limits_{a}^{x}}
	%EndExpansion
	f\left(  t\right)  d_{q}^{a}t-%
	%TCIMACRO{\dint \limits_{a}^{c}}%
	%BeginExpansion
	{\displaystyle\int\limits_{a}^{c}}
	%EndExpansion
	f\left(  t\right)  d_{q}^{a}t.
	\]

%	If $f(x)$ and $g(x)$ are two real functions defined on $\left[  a,b\right]  $
%	whose ordinary derivatives exist in a neighbourhood of $x=a$ and are continuous
%	at $x=a$, then we have $q$-integration by parts formula%
%	
%	\[%
%	%TCIMACRO{\dint \limits_{a}^{b}}%
%	%BeginExpansion
%	{\displaystyle\int\limits_{a}^{b}}
%	%EndExpansion
%	f\left(  t\right)  D_{q}^{a}g\left(  t\right)  d_{q}^{a}t=f\left(  b\right)
%	g\left(  b\right)  -f\left(  a\right)  g\left(  a\right)  -%
%	%TCIMACRO{\dint \limits_{a}^{b}}%
%	%BeginExpansion
%	{\displaystyle\int\limits_{a}^{b}}
%	%EndExpansion
%	g\left(  qt+\left(  1-q\right)  a\right)  D_{q}^{a}f\left(  t\right)
%	d_{q}^{a}t.
%	\]
	\medskip
	
	An important difference between the definite $q$-integral and Riemann integral
	is that even if we are integrating a function on an interval $[c,b]$, $a<c<b$
	we have to take into account its behaviour at $t=a$ as well as its values on
	$[a,c]$. Beside the improper use of Lagrange's mean value theorem, this is the other reason
	for mistakes made in \cite{TARI2}.
	
	\begin{remark}
		In case $a=0$, when writing $D_q^0 f$ and $\di\int d_q^0 t$, we shall omit superscript zeros. This is consistent with the notation for original Jackson derivative and integral.
	\end{remark}
	
	\section{Lagrange's mean value theorem for \texorpdfstring{$q$}{q}-calculus}\label{sec:MVT}
	
	Here and hereafter the symbol $\left\Vert \cdot\right\Vert _{\infty}^{\langle a,b]}$ denotes the supremum
	\[
	\left\Vert f\right\Vert _{\infty}^{\langle a,b]}=\sup
	_{t\in\left\langle a,b\right]  }\left\vert f\left(  t\right)  \right\vert .
	\]
	\begin{remark}
		This is not a norm on the space of all functions with domain $[a,b]$, however, it is a norm, and it coincides with the standard $\|\cdot\|_\infty$ norm, for the class of functions that are continuous at $a$.
	\end{remark}

	\begin{remark}
		Recall that every function $f\colon [a,b]\to\R$ has a $q$-derivative $D_q^a f(t)$ for any $t\in\langle a, b]$. Further, $D_q^a f(a) = \lim_{x\to a} D_q^a f(x)$ when this limit exists. Thus, $q$-differentiable functions are, by definition, continuously differentiable at $a$, and, therefore, the norm of the derivative $\|D_q^a f \|_\infty = \sup_{t\in [a,b]} |f(t)|$ is the same as $\|D_q^a f\|_\infty^{\langle a,b]}$.
		
		Some of the results that follow do not require $q$-differentiability of $f$ (at $x=a$) and this notation allows us to state them in full generality.
	\end{remark}

	\medskip
	
	Let us first see that the standard mean value theorem does not hold in $q$-calculus. Setting $a=0$ and $b=2$, consider a function
	\[
	f\left(  x\right)  =\left\{
	\begin{array}
	[c]{cc}%
	1, & x\in\left[  1,2\right], \\
	0, & x\in\left[  0,1\right\rangle.
	\end{array}
	\right.
	\]
	Clearly $\left\Vert D_{q}f\right\Vert _{\infty}=\frac{1}{1-q}$, but
	\begin{equation}
	\left\vert f\left(  x\right)  -f\left(  y\right)  \right\vert \not\leq\left\Vert
	D_{q}^{a}f\right\Vert _{\infty}\left\vert
	x-y\right\vert \label{La}
	\end{equation}
	for $x=1$ and any $y\in\left\langle 0,q\right\rangle$.
	
	Note that this function, although not continuous, is $q$-differentiable. It is not hard to find examples of continuous functions which also fail the standard mean value theorem. For instance, one could take the same example we give in Remark \ref{rem:protuprimjer} below.
	
	\medskip
	
	In the next theorem we show that inequality \eqref{La} does hold when both $x$ and $y$ belong to the same \emph{$q$-lattice}, which is to say that $y=a+q^{n}\left(x-a\right)$, for some $n\in\Z$.
	
	\begin{theorem}[Mean value inequality for $q$-calculus]\label{thm:MVT}
		Let $f \colon [a,b] \to \R$ be an arbitrary function. Then for all $x\in\left\langle a,b\right]$, and all $n\in\N\cup\{0\}$ we have
		\begin{equation*}
		\left\vert f\left(  x\right)  -f\left(  a+q^{n}\left(  x-a\right)
			\right)\right\vert
		\leq \left| x-\left(  a+q^{n}\left(  x-a\right)  \right)\right|\, \left\Vert D_{q}^{a}f\right\Vert _{\infty}^{\left\langle a,b\right]}.
		\end{equation*}
	\end{theorem}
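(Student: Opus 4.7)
The plan is to proceed by telescoping along the $q$-lattice. Fix $x\in\langle a,b]$ and set $x_k = a + q^k(x-a)$ for $k\geq 0$, so that $x_0 = x$ and $x_n = a + q^n(x-a)$. The key observation is that consecutive lattice points satisfy $x_{k+1} = a + q(x_k - a)$, which is exactly the pair that appears in the definition of $D_q^a f(x_k)$. Consequently,
\[
f(x_k) - f(x_{k+1}) = f(x_k) - f\bigl(a + q(x_k - a)\bigr) = (1-q)(x_k - a)\, D_q^a f(x_k).
\]

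Next I would write $f(x) - f(x_n)$ as the telescoping sum $\sum_{k=0}^{n-1} [f(x_k) - f(x_{k+1})]$, substitute the identity above, and apply the triangle inequality together with the bound $|D_q^a f(x_k)| \leq \|D_q^a f\|_\infty^{\langle a,b]}$ (this is legal because each $x_k$ for $k=0,\dots,n-1$ lies in $\langle a,b]$). This yields
\[
\bigl|f(x) - f(x_n)\bigr| \;\leq\; \|D_q^a f\|_\infty^{\langle a,b]} \sum_{k=0}^{n-1} (1-q)(x_k - a) \;=\; \|D_q^a f\|_\infty^{\langle a,b]} \,(x-a)\sum_{k=0}^{n-1}(1-q)q^k.
\]

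Finally, the geometric series collapses: $\sum_{k=0}^{n-1}(1-q)q^k = 1 - q^n$, and therefore $(x-a)(1-q^n) = x - \bigl(a + q^n(x-a)\bigr) = |x - x_n|$ (the absolute value being automatic since $0<q<1$ and $x>a$). This gives exactly the claimed inequality.

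There is no real obstacle here beyond recognising that the definition of the $q$-derivative \emph{already} encodes the mean value identity on a single lattice step, so summing these identities is both natural and lossless. The reason the classical Lagrange argument failed (as illustrated by the step function example preceding the theorem) is precisely that it tried to compare values of $f$ at points from \emph{different} $q$-orbits; the telescoping trick works because we restrict to a single orbit $\{x_k\}_{k\geq 0}$ where each successive difference is controlled by the $q$-derivative by definition.
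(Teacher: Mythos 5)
Your proof is correct and follows essentially the same route as the paper: both telescope $f(x)-f(a+q^n(x-a))$ along the lattice points $x_k=a+q^k(x-a)$ and use the identity $f(x_k)-f(x_{k+1})=(1-q)(x_k-a)\,D_q^a f(x_k)$ before bounding. The only cosmetic difference is that the paper packages the resulting sum as a weighted average of the values $D_q^a f(x_k)$ with weights $q^k$ (so that the quotient $\frac{f(x)-f(x_n)}{x-x_n}$ is sandwiched between their minimum and maximum, a fact it reuses to prove the Lagrange-type mean value theorem), whereas you apply the triangle inequality directly; the two are equivalent for this statement.
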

		
		\begin{proof} For $n=0$ the statement is trivial. When $n>0$, one can write
			\begin{align*}
				\frac{f(x) - f(a+q^{n} (x-a))}{(1-q)(x-a)} &=  \sum_{i=0}^{n-1}\frac{ f(a+q^{i}(x-a)) - f(a+q^{i+1} (x-a)) }{(1-q)(x-a)}\\
				&=  \sum_{i=0}^{n-1} q^i \cdot \frac{ f(a+q^{i}(x-a)) - f(a+q^{i+1} (x-a)) }{(q^i-q^{i+1})(x-a)}\\
				&=  \sum_{i=0}^{n-1} q^i \cdot D_q^a f(a+q^{i}(x-a)).
			\end{align*}
			Dividing this by $1+q+q^{2}+\cdots+q^{n-1} = \frac{1-q^n}{1-q}$ we obtain
			\begin{equation*}
			\frac{f(x) - f(a+q^{n} (x-a))}{(1-q^n)(x-a)} = \frac{\sum_{i=0}^{n-1} q^i \cdot D_q^a f(a+q^{i}(x-a))}{\sum_{i=0}^{n-1} q^i}.
			\end{equation*}
			On the right hand side, we have a weighted average of numbers
			$$D_q^a f(x),\, D_q^a f(a+q(x-a)),\, \dots ,\, D_q^a f(a+q^{n-1}(x-a))$$
			with weights $1, q, q^2, \dots q^{n-1}$ respectively. This average must therefore be in-between $\di\min_{0\le i < n} D_q^a f(a+q^i (x-a))$ and $\di\max_{0\le i < n} D_q^a f(a+q^i (x-a))$ and in particular
			\begin{equation*}
			\left|\frac{f(x) - f(a+q^{n} (x-a))}{(1-q^n)(x-a)} \right | \le \| D_q^a f\|_\infty^{\langle a,b]}.
			\end{equation*}
			Multiplying both sides by $\left|(1-q^n)(x-a)\right|$ we obtain the claim.
		\end{proof}
	
	\begin{corollary}\label{cor:MVT}
		Let $f \colon [a,b] \to \R$ be an arbitrary function. Then
		$$
		\left\vert f\left(a+q^{m}\left(  b-a\right) \right)  -f\left(  a+q^{k}\left(b-a\right)
		\right)\right\vert
		\leq  (b-a)\, |q^m-q^k| \, \left\Vert D_{q}^{a}f\right\Vert _{\infty}^{\left\langle a,b\right]}.
		$$
		holds for all $m,k\in\N\cup\{0\}$.
	\end{corollary}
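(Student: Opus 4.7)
The plan is to reduce Corollary \ref{cor:MVT} to a single application of Theorem \ref{thm:MVT} after suitable relabelling. The corollary is symmetric in $m$ and $k$, so I would begin by assuming without loss of generality that $m\le k$; the case $m=k$ gives $0\le 0$ trivially, so suppose $m<k$.

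Next I would introduce $x\coloneqq a+q^m(b-a)$ and $n\coloneqq k-m\in\N$. The key algebraic observation is that
\[
a+q^n(x-a) = a + q^{k-m}\cdot q^m(b-a) = a + q^k(b-a),
\]
so the two points appearing in the corollary are exactly the pair $x$ and $a+q^n(x-a)$ to which Theorem \ref{thm:MVT} applies. Note $x\in\langle a,b]$: if $m=0$ then $x=b$, and if $m\ge 1$ then $x\in\langle a,b\rangle$. Applying the theorem yields
\[
\left| f(x) - f(a+q^n(x-a))\right| \le \left| x-(a+q^n(x-a))\right|\,\|D_q^a f\|_\infty^{\langle a,b]}.
\]

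Finally, I would simplify the length factor on the right-hand side:
\[
\left|x-(a+q^n(x-a))\right| = |x-a|\,|1-q^n| = q^m(b-a)(1-q^{k-m}) = (b-a)(q^m-q^k) = (b-a)|q^m-q^k|,
\]
using $b>a$, $0<q<1$, and $m\le k$. Substituting gives the claimed inequality. I do not anticipate any real obstacle here; the corollary is essentially a change of variable in the theorem, and the only thing to keep in mind is the symmetry argument to reduce to $m\le k$ so that $n=k-m$ is a non-negative integer as required by Theorem \ref{thm:MVT}.
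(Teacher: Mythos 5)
Your proof is correct and follows exactly the paper's own argument: assume without loss of generality $k\ge m$, set $x=a+q^{m}(b-a)$ and $n=k-m$, and apply Theorem \ref{thm:MVT}. The paper leaves the final simplification of $\left|x-(a+q^{n}(x-a))\right|$ to the reader, whereas you carry it out explicitly, but the route is identical.
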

	\begin{proof}
		Without loss of generality we can assume $k\ge m$. If we now set \linebreak $x=a+q^m (b-a)$ and $n=k-m$, and then apply Theorem \ref{thm:MVT}, the statement follows.
	\end{proof}

	If we additionally assume that the function in the Theorem \ref{thm:MVT} is continuous, we can obtain a $q$-calculus version of Lagrange's mean value theorem.
	\begin{theorem}[Lagrange's mean value theorem for $q$-calculus]
		Let $f \colon [a,b] \to \R$ be a continuous function and let $x,y\in\left\langle a,b\right]$ be such that $y=a+q^n(x-a)$ for some $n\in\N\cup\{0\}$. Then there exists some $c\in[ y,x ]$ such that
		\begin{equation*}
		f(x)-f(y) = D_q^a f(c)(x-y).
		\end{equation*}
	\end{theorem}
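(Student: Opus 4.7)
The plan is to reuse the key identity established inside the proof of Theorem \ref{thm:MVT}, namely
$$\frac{f(x) - f(a+q^{n} (x-a))}{(1-q^n)(x-a)} = \frac{\sum_{i=0}^{n-1} q^i \cdot D_q^a f(a+q^{i}(x-a))}{\sum_{i=0}^{n-1} q^i},$$
which already does the bulk of the work: it exhibits the difference quotient $(f(x)-f(y))/(x-y)$ as a convex combination (with weights $q^i/\sum q^j$) of the values of $D_q^a f$ at the lattice points $a+q^i(x-a)$ for $i=0,1,\dots,n-1$. All of these points lie in $[y,x]\subset\langle a,b]$, since $q\in\langle 0,1\rangle$ and $y = a+q^n(x-a)$.

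First, I would dispose of the trivial case $n=0$, where $x=y$ and any $c\in[y,x]$ works. For $n\ge 1$, the convex combination above is sandwiched between
$$\min_{0\le i<n} D_q^a f\bigl(a+q^i(x-a)\bigr)\quad\text{and}\quad \max_{0\le i<n} D_q^a f\bigl(a+q^i(x-a)\bigr),$$
each of which is attained at an explicit point of $[y,x]$.

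Next, I would invoke the continuity hypothesis on $f$ to upgrade this sandwich to an actual equality via the intermediate value theorem. Concretely, the formula $D_q^a f(t) = (f(t)-f(a+q(t-a)))/((1-q)(t-a))$ expresses $D_q^a f$ as a ratio of continuous functions on $\langle a,b]$ with non-vanishing denominator; hence $D_q^a f$ is continuous on $\langle a,b]$. Since $y>a$, the restriction of $D_q^a f$ to $[y,x]$ is continuous, and applying the intermediate value theorem on the closed subinterval of $[y,x]$ joining the points that realise the minimum and the maximum, I obtain some $c\in[y,x]$ with
$$D_q^a f(c) = \frac{\sum_{i=0}^{n-1} q^i \cdot D_q^a f(a+q^{i}(x-a))}{\sum_{i=0}^{n-1} q^i} = \frac{f(x)-f(y)}{x-y}.$$
Rearranging yields $f(x)-f(y)=D_q^a f(c)(x-y)$.

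The main obstacle is really just noting why continuity of $f$ transfers to continuity of $D_q^a f$ on the relevant interval, and being careful that the intermediate value argument is applied on a subinterval of $[y,x]$ (so that $c$ lies in $[y,x]$ as asserted) rather than on the larger domain $\langle a,b]$ on which $D_q^a f$ is defined. Everything else is bookkeeping around the identity already available from Theorem \ref{thm:MVT}.
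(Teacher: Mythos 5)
Your proposal is correct and follows essentially the same route as the paper: the trivial case $n=0$, the weighted-average identity from the proof of Theorem \ref{thm:MVT} giving the min--max sandwich over the lattice points, continuity of $D_q^a f$ on $\langle a,b]$ inherited from continuity of $f$, and the intermediate value theorem on a subinterval of $[y,x]$. The only cosmetic difference is that the paper passes through the extrema of $D_q^a f$ over all of $[y,x]$ rather than applying the intermediate value theorem directly between the two lattice points realising the minimum and maximum, which changes nothing of substance.
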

	\begin{proof}
		For $x=y$ ($n=0$) the statement is trivial, so assume $x\neq y$ ($n>0$). From the proof of Theorem \ref{thm:MVT} we have
		\begin{equation*}
		\min_{0\le i < n} D_q^a f(a+q^i (x-a)) \le \frac{f(x) - f(y)}{x-y} \le \max_{0\le i < n} D_q^a f(a+q^i (x-a)).
		\end{equation*}
		
		The function $f$ is continuous on $[a,b]$, hence, its $q$-derivative
		$$D_q^a f (t)= \frac{f(t)-f(a+q(t-a))}{(1-q)(t-a)}$$
		is also continuous on $\langle a,b]$ and in particular on $[y,x]$. Since
		$$\min_{t\in[y,x]}D_q^a f(t) \le \min_{0\le i < n} D_q^a f(a+q^i (x-a)) \le \max_{0\le i < n} D_q^a f(a+q^i (x-a)) \le \max_{t\in[y,x]} D_q^a f(t) $$
		there must exist some $c\in[y,x]$ such that
		$$\frac{f(x) - f(y)}{x-y} = D_q^a f(c),$$
		which proves the claim.
	\end{proof}

	\section{\texorpdfstring{$q$}{q}-Ostrowski inequality for points on \texorpdfstring{$q$}{q}-lattice}\label{sec:OstrowskiOnLattice}
	
	In the following theorem we give a correct proof of $q$-Ostrowski inequality for the points on the $q$-lattice of the form $x=a+q^m(b-a)$. For these values of $x$, the estimate we obtain here is matching that from \cite{TARI2}, but the proof given there is incorrect.
	
	\begin{theorem}[$q$-Ostrowski inequality on $q$-lattice]\label{tm:OSTqNet}
		Let $f\colon [a,b]\to\R$ be a $q$-integrable function over $\left[  a,b\right]  $. Then for every $m\in%
		%TCIMACRO{\U{2115} }%
		%BeginExpansion
		\mathbb{N}
		%EndExpansion
		\cup\left\{  0\right\}  $ the following inequality holds%
		\begin{equation}
		\left\vert f\left(  a+q^{m}\left(  b-a\right)  \right)  -\frac{1}{b-a}%
		%TCIMACRO{\dint \limits_{a}^{b}}%
		%BeginExpansion
		{\displaystyle\int\limits_{a}^{b}}
		%EndExpansion
		f\left(  t\right)  d_{q}^{a}t\right\vert \leq\left(  b-a\right)  \left(
		\frac{1+2q^{2m+1}}{1+q}-q^{m}\right)  \left\Vert D_{q}^{a}f\right\Vert
		_{\infty}^{\left\langle a,b\right]  }. \label{qOST}%
		\end{equation}
	\end{theorem}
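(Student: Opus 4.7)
The plan is to expand the $q$-integral using Jackson's definition and then use the normalization $\sum_{k=0}^\infty (1-q)q^k = 1$ to rewrite the point value $f(a+q^m(b-a))$ as a weighted average of itself. This will let me absorb both terms into a single series of differences of the form $f(a+q^m(b-a))-f(a+q^k(b-a))$, each of which can be controlled by Corollary \ref{cor:MVT}, since both points $a+q^m(b-a)$ and $a+q^k(b-a)$ lie on the same $q$-lattice.

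Concretely, I would write
\begin{equation*}
\frac{1}{b-a}\int_a^b f(t)\,d_q^a t = (1-q)\sum_{k=0}^\infty q^k f(a+q^k(b-a)),
\end{equation*}
and
\begin{equation*}
f(a+q^m(b-a)) = (1-q)\sum_{k=0}^\infty q^k f(a+q^m(b-a)),
\end{equation*}
so that the absolute value on the left-hand side of \eqref{qOST} is bounded by
\begin{equation*}
(1-q)\sum_{k=0}^\infty q^k\, \bigl|f(a+q^m(b-a))-f(a+q^k(b-a))\bigr|.
\end{equation*}
Applying Corollary \ref{cor:MVT} to each term gives the upper bound
\begin{equation*}
(b-a)\,\|D_q^a f\|_\infty^{\langle a,b]}\cdot (1-q)\sum_{k=0}^\infty q^k |q^m - q^k|.
\end{equation*}

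The only remaining task, and the main technical step, is to verify that
\begin{equation*}
(1-q)\sum_{k=0}^\infty q^k |q^m - q^k| = \frac{1+2q^{2m+1}}{1+q} - q^m.
\end{equation*}
I would split the sum at $k=m$: for $k\leq m$ we have $|q^m-q^k|=q^k-q^m$ yielding, after using the finite geometric sums, the contribution $\frac{1-q^{2m+2}}{1+q}-q^m+q^{2m+1}$; for $k\geq m+1$ we have $|q^m-q^k|=q^m-q^k$ contributing $q^{2m+1}-\frac{q^{2m+2}}{1+q}$. Adding these and simplifying the $q^{2m+2}$ terms gives exactly the required expression. This is elementary but requires careful bookkeeping; it is where the slightly unusual constant $\frac{1+2q^{2m+1}}{1+q}-q^m$ comes from.

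A sanity check I would include to justify the approach: when $q\to 1^-$ with $m$ fixed, the sampling point $a+q^m(b-a)$ tends to $a$ and the constant on the right-hand side of \eqref{qOST} should tend to $\frac{1+2}{2}-1 = \frac{1}{2}$, which matches the classical Ostrowski constant at $x=a$. Conversely, as $m\to\infty$ the sampling point tends to $a$ and the constant tends to $\frac{1}{1+q}$, reflecting the fact that in $q$-calculus the value at $a$ is really captured by $\lim_{m\to\infty} f(a+q^m(b-a))$. These checks make me confident the algebraic computation above is correct, and no deeper idea beyond Corollary \ref{cor:MVT} plus the geometric series is required.
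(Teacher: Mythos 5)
Your proposal is correct and follows essentially the same route as the paper: decompose the difference as $(1-q)\sum_k q^k\bigl[f(a+q^m(b-a))-f(a+q^k(b-a))\bigr]$ using $(1-q)\sum_{k\ge 0}q^k=1$, bound each term via Corollary \ref{cor:MVT}, and evaluate $(1-q)\sum_{k\ge 0}q^k|q^m-q^k|=\frac{1+2q^{2m+1}}{1+q}-q^m$ by splitting at $k=m$ (your bookkeeping checks out). One small slip in your sanity check, which does not affect the proof: as $q\to 1^-$ with $m$ fixed the sampling point $a+q^m(b-a)$ tends to $b$, not to $a$, though the limiting constant $\tfrac12$ still matches the classical Ostrowski bound at that endpoint.
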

	
	\begin{proof} Let $k\in\N\cup\{ 0 \}$ be arbitrary. From Corollary \ref{cor:MVT} we have
		$$
		\left\vert f\left(a+q^{m}\left(  b-a\right) \right)  -f\left(  a+q^{k}\left(b-a\right)
		\right)\right\vert
		\leq\left\Vert D_{q}^{a}f\right\Vert _{\infty}^{\left\langle a,x\right]} \, |q^m-q^k|\cdot (b-a).
		$$
		Note that
		\begin{multline*}
		f(a+q^{m}(b-a)) -\frac{1}{b-a}\int\limits_{a}^{b}
		f(t)\, d_{q}^{a}t =\\ = \sum\limits_{k=0}^\infty \left[ f(a+q^m(b-a)) - f(a+q^k(b-a)) \right](1-q)q^k
		\end{multline*}
		and therefore
		\begin{multline*}
			\left|f(a+q^{m}(b-a)) - \frac{1}{b-a}\int\limits_{a}^{b}
			f(t)\, d_{q}^{a}t\right| \le \\
			\begin{aligned}
			&\le (1-q) \sum\limits_{k=0}^\infty \left| f(a+q^m(b-a)) - f(a+q^k(b-a)) \right|q^k \\
			&\le (1-q)(b-a) \| D_q^a f \|_\infty^{\langle a, b]} \sum\limits_{k=0}^\infty \left|q^m - q^k\right|q^k.
			\end{aligned}
		\end{multline*}
		Finally, since
		\begin{align}%
		%TCIMACRO{\dsum \limits_{k=0}^{\infty}}%
		%BeginExpansion
		\nonumber{\displaystyle\sum\limits_{k=0}^{\infty}}
		%EndExpansion
		q^{k}\left\vert q^{m}-q^{k}\right\vert  &  =%
		%TCIMACRO{\dsum \limits_{k=0}^{m-1}}%
		%BeginExpansion
		{\displaystyle\sum\limits_{k=0}^{m-1}}
		%EndExpansion
		q^{k}\left(  q^{k}-q^{m}\right)  +%
		%TCIMACRO{\dsum \limits_{k=m}^{\infty}}%
		%BeginExpansion
		{\displaystyle\sum\limits_{k=m}^{\infty}}
		%EndExpansion
		q^{k}\left(  q^{m}-q^{k}\right) \\
		\nonumber&  =\frac{1-q^{2m}}{1-q^{2}}-q^{m}\frac{1-q^{m}}{1-q}+q^{m}\frac{q^{m}}%
		{1-q}-q^{2m}\frac{1}{1-q^{2}}\\
		&  =\frac{1}{1-q}\left(  \frac{1+2q^{2m+1}}{1+q}-q^{m}\right)
		\label{eq:sumaAbs}\end{align}
		we obtain the desired inequality \eqref{qOST}.
	\end{proof}

	\begin{remark}\label{rem:protuprimjer}
		Note that $q$-Ostrowski inequality in the previous theorem, assuming $q$-differentiability of $f$, can equivalently be written as
		\begin{multline}\label{eq:Nasa}
		\left|f(x)-\frac{1}{b-a}\int_a^bf(t)\,d^a_q t\right| \le (b-a)\left(\frac{1+2q\left(\frac{x-a}{b-a}\right)^2}{1+q}-\frac{x-a}{b-a}\right)\|D^a_qf\|_\infty
		\end{multline}
		where $x=a+q^m(b-a)$. But this is misleading since Theorem \ref{tm:OSTqNet} claims that inequality \eqref{eq:Nasa} holds only for $x\in[a,b]$ of the form $x=a+q^m(b-a)$.
		
		After some algebraic manipulations, it can be checked that inequality \eqref{eq:Nasa} is exactly the same as inequality \eqref{OST2} from \cite{TARI2}*{Theorem 3.5} where it is claimed that it holds for all $x\in[a,b]$.

		\medskip
		
		We stress again that inequality \eqref{OST2} (i.e.\ \eqref{eq:Nasa}) does not hold for all $x\in[a,b]$. It is not hard to find examples invalidating it. For example, set $a=0$, $b=1$, $q=\frac12$ and take function $f\colon [0,1]\to\R$ defined as
		$$f(x) = \begin{cases}
		x, &\text{ if } x \in [0,\frac9{10}],\\
		-9x+9, &\text{ if } x \in [\frac9{10},1].
		\end{cases}$$
		
		We leave it to the reader to confirm that $\di\int_{0}^{1} f(t) \, d_q t = \frac{q^2}{1+q} = \frac16$ and \linebreak $\|D_qf\|_\infty = 1$. Plugging all this into \eqref{OST2} (or \eqref{eq:Nasa}) for e.g.\ $x=\frac9{10}$ gives an obvious contradiction:
		$$\left|\frac9{10} - \frac16\right| \not\le \frac{23}{75}.$$
		Function $f$, as well as the bound it surpasses are shown in Figure \ref{fig:counterexample1}.
		
		\begin{figure}
			\begin{center}
				\includegraphics[trim = {1em 0 0 0 }]{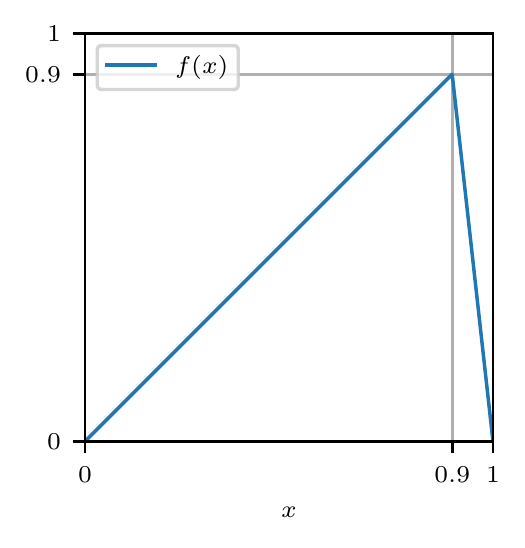}\hfil\includegraphics[trim = {1em 0 0 0 }]{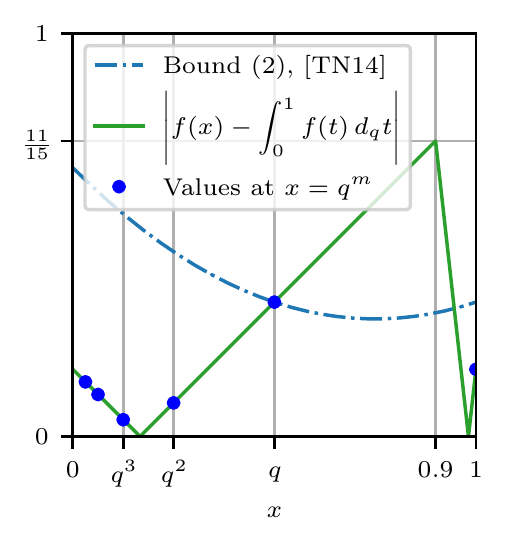}
			\end{center}
			
			\caption{A counterexample showing that inequality \eqref{OST2} does not hold for every $x\in[a,b]$. Note, however, that it does hold for every $x$ of the form $a+q^m(b-a)$ as is claimed in Theorem \ref{tm:OSTqNet}. (Here $q=\frac12$.)}\label{fig:counterexample1}
		\end{figure}
	\end{remark}
	
	\begin{theorem}
		Ostrowski inequality for $q$-calculus \eqref{qOST} is sharp for every
		$q\in\left\langle 0,1\right\rangle $.
	\end{theorem}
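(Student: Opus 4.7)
\smallskip

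\noindent\emph{Proof plan.} The plan is to exhibit, for every $q\in\langle 0,1\rangle$ and every $m\in\mathbb{N}\cup\{0\}$, an explicit extremizer achieving equality in \eqref{qOST}. Inspecting the proof of Theorem \ref{tm:OSTqNet} reveals that equality requires the triangle inequality applied to the series and the estimate from Corollary \ref{cor:MVT} to be simultaneously saturated. This forces the sign pattern $f(a+q^k(b-a)) - f(x) = |q^k-q^m|(b-a)$, and the natural candidate is therefore
$$f(t) = |t-x|, \qquad x = a + q^m(b-a).$$

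The first key step is to show $\|D_q^a f\|_\infty^{\langle a,b]}=1$ via a short case analysis. For $t\in\langle a,x]$ both $t$ and $a+q(t-a)$ lie in $[a,x]$, so $D_q^a f(t) = -1$; and for $t\ge a+q^{m-1}(b-a)$ both points lie above $x$, so $D_q^a f(t) = 1$. In the transition interval $t\in\langle x,a+q^{m-1}(b-a)\rangle$ a direct computation gives $D_q^a f(t) = \tfrac{1+q}{1-q} - \tfrac{2(x-a)}{(1-q)(t-a)}$, which is monotone in $t$ and interpolates between $-1$ and $+1$.

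The second step is to evaluate the left-hand side of \eqref{qOST}. Since $f(x)=0$ and $f(a+q^k(b-a)) = |q^k-q^m|(b-a)$, the Jackson integral equals $(1-q)(b-a)^2\sum_{k=0}^\infty q^k|q^k-q^m|$. Dividing by $b-a$ and invoking the closed-form identity \eqref{eq:sumaAbs} already established in the proof of Theorem \ref{tm:OSTqNet}, the left-hand side becomes exactly $(b-a)\bigl(\tfrac{1+2q^{2m+1}}{1+q}-q^m\bigr)$, which matches the right-hand side with $\|D_q^a f\|_\infty^{\langle a,b]}=1$.

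The only non-trivial verification in this plan is the bound on $|D_q^a f(t)|$ in the transition interval; elsewhere the $q$-derivative is constantly $\pm 1$ by inspection, and once \eqref{eq:sumaAbs} is in hand the integral evaluation is immediate. The main obstacle is therefore really just a careful treatment of this single transition case, which the monotonicity of the explicit formula above resolves cleanly.
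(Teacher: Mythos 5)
Your proposal is correct and follows essentially the same route as the paper: the same extremal function $f(t)=|t-x|$ with $x=a+q^m(b-a)$, the same evaluation of the Jackson integral via the identity \eqref{eq:sumaAbs}, and the same three-case analysis of the $q$-derivative (the paper merely reduces to $[0,1]$ first and asserts the transition-interval bound without the explicit formula you supply). Your explicit monotone expression $D_q^a f(t)=\tfrac{1+q}{1-q}-\tfrac{2(x-a)}{(1-q)(t-a)}$ checks out, interpolating from $-1$ at $t=x$ to $+1$ at $t=a+q^{m-1}(b-a)$, so the argument is complete.
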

	
	\begin{proof}
		In order to simplify notation, we prove sharpness in case $a=0$ and $b=1$. Pre-composing the examples below with the affine transformation $t\mapsto \frac{t-a}{b-a}$ will produce examples that work for any $a$ and $b$.
		
		We will now show that for the
		function%
		\[
		f\left(  x\right)  =\left\vert x-q^{m}\right\vert \text{, }x\in\left[
		0,1\right]
		\]
		equality in \eqref{qOST} is obtained at $x=q^m$, i.e.
		\[
		\left\vert f\left(  q^{m}\right)  -%
		%TCIMACRO{\dint \limits_{0}^{1}}%
		%BeginExpansion
		{\displaystyle\int\limits_{0}^{1}}
		%EndExpansion
		f\left(  t\right) \, d_{q}t\right\vert =\left(  \frac{1+2q^{2m+1}}%
		{1+q}-q^{m}\right)  \left\Vert D_{q}f\right\Vert _{\infty}^{\langle 0,1]}%
		\]
		 We have $f\left(  q^{m}\right)  =0$ and
		$$
		{\displaystyle\int\limits_{0}^{1}}
		f\left(  t\right) \, d_{q}t =\left(  1-q\right)
		{\displaystyle\sum\limits_{k=0}^{\infty}}
		q^{k}f\left(  q^{k}\right) = (1-q)\sum_{k=0}^\infty q^k|q^k-q^m| =\frac{1+2q^{2m+1}}{1+q}-q^{m},
		$$
		where we used the previously computed sum \eqref{eq:sumaAbs}.
		
		Since it is obvious that%
		\[%
		\begin{array}
		[c]{ll}%
		D_{q}f\left(  t\right)  =-1, & 0\leq t\leq q^{m},\bigskip\\
		D_{q}f\left(  t\right)  \in\left\langle -1,1\right\rangle, & q^{m}<t<q^{m-1},\bigskip\\
		D_{q}f\left(  t\right)  =1, & q^{m-1}\leq t\leq1,\bigskip
		\end{array}
		\]
		we have $\left\Vert D_{q} f\right\Vert _{\infty}^{\left\langle 0,1\right]  }=1$
		and the equality holds.
	\end{proof}
	
	\begin{corollary}
		Let $f\colon [a,b]\to\R$ be a $q$-integrable function over $\left[  a,b\right]$. Then the
		following inequality holds%
		\[
		\left\vert f\left(  b\right)  -\frac{1}{b-a}%
		%TCIMACRO{\dint \limits_{a}^{b}}%
		%BeginExpansion
		{\displaystyle\int\limits_{a}^{b}}
		%EndExpansion
		f\left(  t\right)  d_{q}^{a}t\right\vert \leq\frac{q\left(  b-a\right)  }%
		{1+q}\left\Vert D_{q}^{a}f\right\Vert _{\infty}^{\langle a,b]}.
		\]
		
	\end{corollary}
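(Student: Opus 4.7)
The plan is to obtain this bound as a direct specialisation of Theorem \ref{tm:OSTqNet} with $m=0$. That choice gives $a + q^m (b-a) = a + (b-a) = b$, so the left-hand side of \eqref{qOST} already matches the expression $|f(b) - \frac{1}{b-a}\int_a^b f(t)\,d_q^a t|$ appearing in the corollary. The hypothesis in Theorem \ref{tm:OSTqNet} is precisely $q$-integrability of $f$ on $[a,b]$, which is the assumption made here, so no additional regularity needs to be checked and the theorem applies verbatim.

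The only computational step is to simplify the constant on the right-hand side of \eqref{qOST} at $m=0$. Substituting into $\frac{1+2q^{2m+1}}{1+q} - q^m$ gives
$$\frac{1+2q}{1+q} - 1 = \frac{(1+2q)-(1+q)}{1+q} = \frac{q}{1+q},$$
and multiplying by $(b-a)\,\|D_q^a f\|_\infty^{\langle a,b]}$ recovers exactly the claimed bound. There is no real obstacle here; the corollary is a one-line specialisation, and the only thing worth noting is that the point $x=b$ does belong to the $q$-lattice $\{a + q^m(b-a) : m \in \N \cup \{0\}\}$ (as the endpoint corresponding to $m=0$), which is what makes Theorem \ref{tm:OSTqNet} directly applicable without invoking the full version of the inequality to be proved later in Section \ref{sec:OstrowskiFull}.
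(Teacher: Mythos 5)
Your proposal is correct and is exactly the paper's proof: the paper also simply takes $m=0$ in Theorem \ref{tm:OSTqNet}, and your explicit simplification of the constant to $\frac{q}{1+q}$ is the only (routine) computation involved.
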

	
	\begin{proof}
		Take $m=0$ in Theorem \ref{tm:OSTqNet}.
	\end{proof}
	
	\begin{corollary}\label{cor:mInfty}
		Let $f\colon [a,b]\to\R$ be a $q$-integrable function over $\left[  a,b\right]$. Additionally, assume that $f$ is continuous at $x=a$. Then the following inequality holds%
		\[
		\left\vert f\left(  a\right)  -\frac{1}{b-a}%
		%TCIMACRO{\dint \limits_{a}^{b}}%
		%BeginExpansion
		{\displaystyle\int\limits_{a}^{b}}
		%EndExpansion
		f\left(  t\right)  d_{q}^{a}t\right\vert \leq\frac{b-a}{1+q}\left\Vert
		D_{q}^{a}f\right\Vert _{\infty}^{\langle a,b]}.
		\]
		
	\end{corollary}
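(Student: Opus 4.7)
The plan is to derive this corollary as the limiting case $m\to\infty$ of Theorem \ref{tm:OSTqNet}. Setting $x_m = a+q^m(b-a)$, since $q\in\langle 0,1\rangle$, we have $x_m\to a$ as $m\to\infty$. The assumed continuity of $f$ at $x=a$ then guarantees $f(x_m)\to f(a)$, while the integral term $\frac{1}{b-a}\int_a^b f(t)\,d_q^a t$ is independent of $m$. Hence the left-hand side of \eqref{qOST} tends to $\left| f(a) - \frac{1}{b-a}\int_a^b f(t)\,d_q^a t\right|$.

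For the right-hand side of \eqref{qOST}, I would observe that $q^m\to 0$ and $q^{2m+1}\to 0$ as $m\to\infty$, so
\[
(b-a)\left(\frac{1+2q^{2m+1}}{1+q}-q^m\right)\|D_q^a f\|_\infty^{\langle a,b]} \longrightarrow \frac{b-a}{1+q}\|D_q^a f\|_\infty^{\langle a,b]}.
\]
Passing to the limit in the inequality of Theorem \ref{tm:OSTqNet}, which is preserved under taking limits, yields the desired bound.

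I do not anticipate any real obstacle here. The only subtlety is that Theorem \ref{tm:OSTqNet} does not require continuity of $f$ at $a$, but the identification $\lim_{m\to\infty} f(x_m) = f(a)$ does, which is why continuity at $a$ appears as the extra hypothesis in the statement. Note also that continuity at $a$ is exactly what allows the supremum notation $\|D_q^a f\|_\infty^{\langle a,b]}$ to coincide with $\|D_q^a f\|_\infty$ in the cases where $f$ happens to be $q$-differentiable at $a$, so the statement is consistent with the earlier remark in Section \ref{sec:MVT}.
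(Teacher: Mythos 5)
Your proof is correct and follows exactly the paper's own argument: take $m\to\infty$ in Theorem \ref{tm:OSTqNet}, using continuity of $f$ at $a$ to identify $\lim_{m\to\infty}f(a+q^m(b-a))=f(a)$ while the right-hand side tends to $\frac{b-a}{1+q}\|D_q^a f\|_\infty^{\langle a,b]}$. No issues.
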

	
	\begin{proof}
		Let $m\rightarrow\infty$ in Theorem \ref{tm:OSTqNet}. The claim follows because $f$ is continuous at $x=a$, hence
		$\lim\limits_{m\to\infty} f\left(  a+q^{m}\left(  b-a\right)  \right)
		=f\left(  a\right)  $.
	\end{proof}

	\medskip
	
	The tight bound in the classical Ostrowski inequality is obtained for $x=\frac{a+b}{2}$.
	In that case \eqref{OST} reduces to midpoint inequality
	\[
	\left\vert f\left(\frac{a+b}{2}\right)-\frac{1}{b-a}\int_{a}^{b}f(t)dt\right\vert
	\leq\frac{b-a}{4}\left\Vert f^{\prime}\right\Vert _{\infty}.
	\]
	
	We now argue that for $q$-Ostrowski inequality the tight bound is obtained for $m=\left\lfloor \log_{q}\frac{1}{2}\right\rfloor$. For a fixed $q\in\left\langle 0,1\right\rangle $ we want to find
	\[
	\underset{m\in%
		%TCIMACRO{\U{2115} }%
		%BeginExpansion
		\mathbb{N}
		%EndExpansion
		\cup\left\{  0\right\}  }{\min}\left\{  \frac{1+2q^{2m+1}}{1+q}-q^{m}\right\}
	.
	\]

	It is easy to check that the function
	\[
	f\left(  x\right)  =\frac{1+2q^{2x+1}}{1+q}-q^{x}\text{, \ }x\in\left[
	0,\infty\right\rangle
	\]
	has only one critical point $x=\log_{q}\frac{1+q}{4q}$ at which a strict
	global minimum occurs. So we want to find the largest $m\in\N$ for which the following inequality holds%
	\[
	\frac{1+2q^{2m+1}}{1+q}-q^{m}\leq\frac{1+2q^{2\left(  m-1\right)  +1}}%
	{1+q}-q^{m-1}.
	\]
	In case this does not hold for any $m\in\N$, the minimum is clearly attained at $m=0$.
	
	From the inequality above we get
	\[
	q^{m-1}\left(1-2q^{m}\right)  \left(  1-q\right)  \leq0,
	\]%
	\[
	q^{m}\geq\frac{1}{2},
	\]
	hence
	\[
	m\leq\log_{q}\frac{1}{2},
	\]
	and therefore $m=\left\lfloor \log_{q}\frac{1}{2}\right\rfloor$.
	
	The following corollary is the
	midpoint inequality for $q$-calculus.
	
	\begin{corollary}[Midpoint inequality for $q$-calculus]
		Let $f\colon\left[
		a,b\right]  \rightarrow%
		%TCIMACRO{\U{211d} }%
		%BeginExpansion
		\mathbb{R}
		%EndExpansion
		$ be a $q$-integrable function over $\left[  a,b\right]  $. Then the following
		inequality holds%
		\begin{align*}
		&  \left\vert f\left(  a+q^{\left\lfloor \log_{q}\frac{1}{2}\right\rfloor
		}\left(  b-a\right)  \right)  -\frac{1}{b-a}%
		%TCIMACRO{\dint \limits_{a}^{b}}%
		%BeginExpansion
		{\displaystyle\int\limits_{a}^{b}}
		%EndExpansion
		f\left(  t\right)  d_{q}^{a}t\right\vert \\
		&  \leq\left(  b-a\right)  \left(  \frac{1+2q^{2\left\lfloor \log_{q}\frac
				{1}{2}\right\rfloor +1}}{1+q}-q^{\left\lfloor \log_{q}\frac{1}{2}\right\rfloor
		}\right)  \left\Vert D_{q}^{a}f\right\Vert _{\infty}^{\left\langle a,b\right]
		}.
		\end{align*}
		
	\end{corollary}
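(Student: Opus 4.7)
The plan is to recognize this corollary as a direct application of Theorem \ref{tm:OSTqNet} with the specific choice $m = \left\lfloor \log_{q}\frac{1}{2}\right\rfloor$. The bulk of the work, namely identifying this value of $m$ as the one yielding the sharpest (smallest) constant in \eqref{qOST}, has already been carried out in the paragraphs immediately preceding the corollary.

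First, I would verify that $m = \left\lfloor \log_{q}\frac{1}{2}\right\rfloor$ is a legitimate index, i.e.\ an element of $\N\cup\{0\}$. Since $q\in\langle 0,1\rangle$, we have $\log_q \frac{1}{2} = \frac{\ln(1/2)}{\ln q} > 0$, so its floor is a nonnegative integer, and therefore Theorem \ref{tm:OSTqNet} may be applied with this choice of $m$.

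Then, invoking Theorem \ref{tm:OSTqNet} with this $m$ gives exactly the stated inequality, with no further computation required beyond substitution. There is no hard step; the only thing worth emphasising is that the preceding discussion — showing that $f(x) = \frac{1+2q^{2x+1}}{1+q} - q^{x}$ has its unique minimum at $x = \log_{q}\frac{1+q}{4q}$ and that the minimiser over $\N\cup\{0\}$ is $\left\lfloor \log_{q}\frac{1}{2}\right\rfloor$ — justifies calling this a midpoint inequality, but it is not logically needed for the inequality itself, only for the claim that this particular choice of $m$ is optimal. I would therefore keep the proof to a single line, simply pointing to Theorem \ref{tm:OSTqNet}.
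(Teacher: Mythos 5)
Your proposal is correct and coincides with the paper's own proof, which is the one-line observation that the corollary follows from Theorem \ref{tm:OSTqNet} with $m=\left\lfloor \log_{q}\frac{1}{2}\right\rfloor$. Your extra check that this $m$ lies in $\mathbb{N}\cup\{0\}$ (since $\log_q\frac12>0$ for $q\in\langle 0,1\rangle$) is a small, correct addition that the paper leaves implicit.
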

	
	\begin{proof}
		Take $m=\left\lfloor \log_{q}\frac{1}{2}\right\rfloor$ in Theorem \ref{tm:OSTqNet}.
	\end{proof}
	
	\section{\texorpdfstring{$q$}{q}-Ostrowski inequality}\label{sec:OstrowskiFull}
	
	We shall now derive the correct bound for all the other $x\in[a,b]$ which are not of the form $x=a + q^m(b-a)$.
	
	\begin{theorem}\label{tm:our_ostrowski}
		Let $f\colon [a,b] \to \R$ be $q$-integrable over $[a,b]$, and further assume that $f$ is continuous at $x=a$. Then for all $x\in[a,b]$ the following inequality holds
		\begin{equation}\label{ineq:ostrowski_ineq}
		\left|f(x)-\frac{1}{b-a}\int_a^bf(t)\, d^a_qt\right| \le
		(b-a)\left(\frac{x-a}{b-a}+\frac{1}{1+q}\right)\|D^a_qf\|_\infty^{\langle a,b]}.
		\end{equation}
		Moreover, this bound is sharp whenever $x$ is not of the form $a+q^m(b-a)$.
	\end{theorem}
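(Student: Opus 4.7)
The plan is a triangle-inequality split through $f(a)$ for the upper bound, and an orbit-based construction for sharpness. For the upper bound I would start from
\begin{equation*}
\left|f(x)-\frac{1}{b-a}\int_a^bf(t)\,d^a_qt\right| \le \left|f(x)-f(a)\right| + \left|f(a)-\frac{1}{b-a}\int_a^bf(t)\,d^a_qt\right|.
\end{equation*}
The second term is exactly what Corollary \ref{cor:mInfty} bounds by $\frac{b-a}{1+q}\|D^a_qf\|_\infty^{\langle a,b]}$. For the first term, Theorem \ref{thm:MVT} gives $|f(x)-f(a+q^n(x-a))|\le(1-q^n)(x-a)\|D^a_qf\|_\infty^{\langle a,b]}$; letting $n\to\infty$ and using continuity of $f$ at $a$ yields $|f(x)-f(a)|\le(x-a)\|D^a_qf\|_\infty^{\langle a,b]}$. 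Summing the two bounds produces \eqref{ineq:ostrowski_ineq}.

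For the sharpness claim, I would rescale so that $a=0$ and $b=1$, and for $x\in(q^{m+1},q^m)$ set $y_0:=x/q^m\in(q,1)$. Decompose $(0,1]$ into its $q$-orbits $\{zq^j:j\ge 0\}$ under $t\mapsto qt$. Because $x$ is off the lattice $\{q^k\}$, the orbits of $1$ and of $y_0$ are disjoint. Define $f\colon[0,1]\to\R$ orbit-wise by $f(q^k)=-q^k$, $f(y_0 q^j)=y_0 q^j$, $f\equiv 0$ on every remaining orbit, and $f(0)=0$. Since each orbit is $t\mapsto qt$-invariant, a direct check gives $D_qf\equiv -1$ on the orbit of $1$, $D_qf\equiv 1$ on the orbit of $y_0$, and $D_qf\equiv 0$ elsewhere, so $\|D_qf\|_\infty^{\langle 0,1]}=1$; continuity at $0$ and $q$-integrability are then immediate. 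Finally,
\begin{equation*}
f(x)-\int_0^1 f(t)\,d_qt \;=\; x - (1-q)\sum_{k=0}^\infty q^k\cdot(-q^k) \;=\; x + \frac{1}{1+q},
\end{equation*}
which matches the right-hand side of \eqref{ineq:ostrowski_ineq}.

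The main obstacle is the sharpness step. The triangle-inequality bound is tight only if $f(x)-f(a)$ and $f(a)-\frac{1}{b-a}\int f$ can be simultaneously saturated \emph{with the same sign}; for $f$ linear on $[a,b]$ these have opposite signs, which is precisely why the bound \eqref{qOST} of Theorem \ref{tm:OSTqNet} is strictly smaller on the lattice. The key observation that unlocks the construction is that $t\mapsto a+q(t-a)$ partitions $(a,b]$ into invariant orbits, and $D_q^a f$ at any point depends only on the values of $f$ along a single orbit; hence one may freely prescribe opposite behaviours on the orbit of $b$ and on the orbit through $x$, and these orbits are disjoint precisely when $x$ is not on the $q$-lattice generated by $b$.
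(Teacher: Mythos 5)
Your proof of the upper bound is exactly the paper's: the triangle-inequality split through $f(a)$, with the first term handled by letting $n\to\infty$ in Theorem \ref{thm:MVT} (using continuity at $a$) and the second by Corollary \ref{cor:mInfty}. Your sharpness construction, however, takes a genuinely different route. The paper defines $f_x$ as the \emph{continuous} piecewise-linear function with $f_x(q^n)=-q^n$, $f_x(xq^n)=xq^n$ and $f_x(0)=0$, and bounds its $q$-derivative via the self-similarity $f_x(qt)=qf_x(t)$, which yields $D_qf_x(t)=f_x(t)/t$ and hence $|D_qf_x(t)|\le 1$. You instead observe that $D_qf(t)$ depends only on the values of $f$ along the orbit of $t$ under $t\mapsto qt$, and prescribe $f=-t$ on the orbit of $1$, $f=t$ on the (disjoint, since $x$ is off the lattice) orbit of $x$, and $f\equiv 0$ on all remaining orbits. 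Your verification is correct and arguably more elementary: $D_qf$ is constantly $-1$, $1$ or $0$ on each orbit, $|f(t)|\le t$ gives continuity at $0$, the $q$-integral sees only the orbit of $1$ and equals $-\tfrac{1}{1+q}$, and $f(x)-\int_0^1 f\,d_qt = x+\tfrac{1}{1+q}$; this is fully legitimate since the theorem demands only $q$-integrability and continuity at $a$. What the paper's interpolated construction buys is an extremal function continuous on all of $[0,1]$ (a stronger form of sharpness), and it sets the stage for the subsequent perturbed examples $f_{x,\epsilon}$ showing the bound remains optimal even within the class of $q$-differentiable functions --- something your orbit-sparse, everywhere-discontinuous example cannot provide. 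Also note your heuristic closing remark is slightly off: for the paper's and your extremal functions the two terms of the triangle inequality are saturated with the \emph{same} sign ($f(x)-f(a)=x>0$ and $f(a)-\int = \tfrac{1}{1+q}>0$); the reason the lattice bound \eqref{qOST} is smaller is that for $x=q^m$ the point $x$ is forced onto the orbit of $b$, not a sign obstruction for linear $f$.
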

	
	\begin{remark}
		Note that the bound given in this theorem is strictly worse than the one in Theorem \ref{tm:OSTqNet}. But unlike that bound, this one holds for all $x\in[a,b]$.
	\end{remark}
	
	\begin{proof}[Proof of Theorem \ref{tm:our_ostrowski}]
		In Theorem \ref{thm:MVT} we proved a version of mean value inequality for $q$-calculus:
		$$\left|f(x)-f(a+q^n(x-a))\right|\le |x-(a+q^n(x-a)| \, \|D^a_qf\|_{\infty}^{\langle a,b]}, $$
		holds for all $x\in[a,b]$, and $n \in \N\cup\{0\}$. Fixing $x$ and letting $n\to\infty$ we get
		$$\left|f(x)-f(a)\right|\le (x-a)\, \|D^a_qf\|_{\infty}^{\langle a,b]}, \text{ for all } x\in[a,b],$$
		where we made use of the continuity of $f$ at $x=a$.
		
		From Corollary \ref{cor:mInfty} we know that
		$$\left|f(a)-\frac{1}{b-a}\int_a^bf(t)\,d^a_q t\right| \le \frac{b-a}{1+q}\|D^a_qf\|_\infty^{\langle a,b]}.$$
		Putting these two inequalities together we obtain the claim
		\begin{multline*}
		\left|f(x)-\frac{1}{b-a}\int_a^bf(t)\,d^a_q t\right| \le
		\left|f(x)-f(a)\right| + \left|f(a)-\frac{1}{b-a}\int_a^bf(t)\,d^a_q t\right| \\
		\le \left(x-a+\frac{b-a}{1+q}\right)\|D^a_qf\|_\infty^{\langle a,b]} = (b-a)\left(\frac{x-a}{b-a}+\frac{1}{1+q}\right)\|D^a_qf\|_\infty^{\langle a,b]}.
		\end{multline*}
	\end{proof}
	
	\subsection{Sharpness of the inequality}
	In order to simplify notation in this subsection, we set $a=0$ and $b=1$. All the claims hold in full generality for any shifted domain $[a,b]$ and corresponding $q$-derivative $D_q^a$ and $q$-integral.

	Our goal is to show that for any $q\in\langle 0,1 \rangle$, and for each $x\in[0,1]$ not of the form $x=q^n$, there exist a function $f_x\colon[0,1]\to\R$ which attains the bound
	\begin{equation}\label{eq:bound01}
	\left|f(x)-\int_0^1 f(t) \, d_qt\right| \le
	\left(x+\frac{1}{1+q}\right)\|D_qf\|_\infty^{\langle 0,1]}.
	\end{equation}
	We assume that $q\in\langle 0,1 \rangle$, once it was chosen, is fixed.
	
	Observe that inequality \eqref{eq:bound01} is scale invariant, so it is sufficient to look for examples $f$ with $\|D_qf\|_\infty^{\langle 0,1]}=1$. Setting $\tilde{f} = Mf$ for any $M \ge 0$ then produces examples with $\|D_q \tilde{f}\|_\infty^{\langle 0,1]}=M$ which also attain the bound.
	
	We now proceed with the construction of $f_x$ where we fixed $x\in[0,1]$ not of the form $x=q^n$. We further assume that $x\in\langle q,1\rangle$. We will show how to treat other $x$ afterwards.
	
	Let $f_x\colon[0,1]\to\R$ be a function such that $f_x(q^n) = -q^n$ and $f_x(xq^n) = xq^n$ for all $n\in\N\cup\{0\}$. Further let $f_x(0)=0$, and for all other $t\in[0,1]$ let $f_x(t)$ be defined as the linear interpolation of the previously defined points. More precisely, let $f_x$ be defined as:
	$$f_x(t) = \begin{cases}
	-\frac{1+x}{1-x}t+\frac{2x}{1-x}q^n, &\text{if } xq^n \le t\le q^n, \\
	\frac{x+q}{x-q}t-\frac{2qx}{x-q}q^n, &\text{if } q^{n+1}\le t\le xq^n,
	\end{cases}$$
	where $n\in\N\cup\{0\}$ and additionally $f_x(0)=0$ see Figure \ref{fig:nondifexample}.
	
	\begin{figure}
		\begin{center}
			\includegraphics[trim = {2em 0 0 0 }]{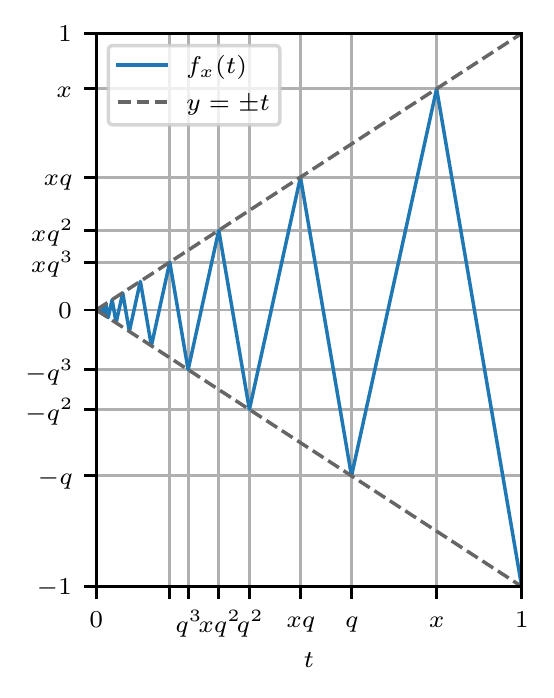}\hfil\includegraphics[trim = {2em 0 0 0 }]{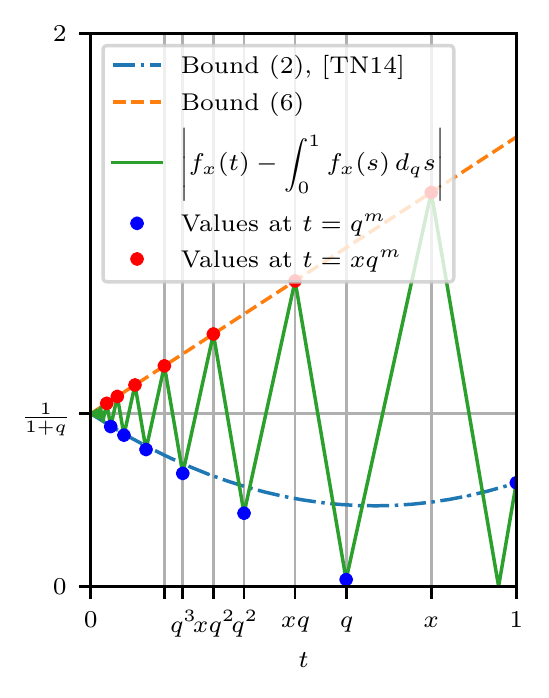}
		\end{center}
		\caption{An example showing that the bound in Theorem \ref{tm:our_ostrowski} is sharp. Here $q=0.6$, $x=0.8$, and $f_x$ attains the bound at all $t=xq^m$. Note that $f_x$ is not $q$-differentiable at $t=0$, but somewhat surprisingly, its $q$-derivative over $\langle0,1]$ never exceeds 1 in absolute value. Also, the incorrect bound \eqref{OST2} does hold for $t$ of the form $t=q^m$.}\label{fig:nondifexample}
	\end{figure}
	
	Note that $f_x$ is continuous on $[0,1]$ but is not $q$-differentiable at $t=0$. Later, we will show how to alter these examples to produce $q$-differentiable examples.
	
	\begin{proof}[Claim 1] $f_x$ is $q$-differentiable on $\langle 0,1]$ with $\|D_qf\|_\infty^{\langle 0,1]}=1$.
		
		Note that $f_x$ is self-similar. To be precise, $f_x$ over $\langle q^2,q^1]$ is scaled version of $f_x$ over  $\langle q,1]$. Indeed, it is readily checked that $f_x(qt) = qf_x(t)$ for all $t\in\langle0,1]$. This allows us to easily compute $D_qf_x$.
		$$D_qf_x(t) = \frac{f_x(t)-f_x(qt)}{t(1-q)}=\frac{f_x(t)-qf_x(t)}{t(1-q)} = \frac{f_x(t)}{t},$$
		hence
		$$|D_qf_x(t)| = \left|\frac{f_x(t)}{t}\right| \le 1.$$
	\end{proof}
	
	\begin{proof}[Claim 2] $f_x$ is $q$-integrable on $[0,1]$ with $\di\int_{0}^{1}f_x(t) \, d_qt = -\frac{1}{1+q} $.
		
		We calculate:
		\begin{multline*}
		\int_{0}^{1}f_x(t) \, d_qt = (1-q)\sum_{n=0}^{\infty} f(q^n)q^n = (1-q)\sum_{n=0}^{\infty} -q^nq^n = \\
		= (1-q)\sum_{n=0}^{\infty} -q^{2n} = -\frac{1}{1+q}
		\end{multline*}
	\end{proof}
	
	Lastly, note that
	$$\left|f_x(x) - \int_{0}^{1}f_x(t) \, d_qt\right| = \left|x - \left( -\frac{1}{1+q}\right) \right| = x + \frac{1}{1+q}$$
	which completes the proof that $f_x$ attains the bound at $x$ for $x\in\langle q,1 \rangle$.
	
	For $x \not\in \langle q,1 \rangle$, we first find $n\in\N$ such that $q^{n+1} < x < q^n$ and set $\tilde{x} = \dfrac{x}{q^n}$. Now $\tilde{x} \in \langle q,1 \rangle$ and for function $f_{\tilde{x}}$ as above we have
	$$\left|f_{\tilde{x}}(x) - \int_{0}^{1}f_{\tilde{x}}(t) \, d_qt\right| = \left|f_{\tilde{x}}(\tilde{x}q^n) - \left( -\frac{1}{1+q}\right) \right| = \tilde{x}q^n + \frac{1}{1+q} = x+\frac{1}{1+q}$$
	so $f_{\tilde{x}}$ attains the bound at $x$ for $x\in\langle q^{n+1}, q^n \rangle$.
	
	\medskip
	
	Putting together Theorems \ref{tm:OSTqNet} and \ref{tm:our_ostrowski} and taking into account the examples following each, we obtain the following corollary.
	\begin{corollary}[Full $q$-Ostrowski inequality]
		Let $f\colon [a,b] \to \R$ be $q$-integrable over $[a,b]$, and further assume that $f$ is continuous at $x=a$. Then for all $x\in[a,b]$ the following sharp inequality holds
		\begin{equation*}
		\left|f(x)-\frac{1}{b-a}\int_a^bf(t)\, d^a_qt\right| \le M(x)\|D^a_qf\|_\infty^{\langle a,b]},
		\end{equation*}
		where $M(x)$ is a discontinuous function:
		$$ M(x) = 
		\begin{cases}
		(b-a)\left(\frac{1+2q\left(\frac{x-a}{b-a}\right)^2}{1+q}-\frac{x-a}{b-a}\right), & \text{ if } x=a+q^m(b-a) \text{ for } m\in\N\cup\{0\},\\
		(b-a)\left(\frac{x-a}{b-a}+\frac{1}{1+q}\right), & \text{ otherwise.}
		\end{cases}
		$$
	\end{corollary}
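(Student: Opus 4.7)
The plan is a straightforward case split on whether $x$ is a ``$q$-lattice point'' of $[a,b]$, i.e.\ $x=a+q^m(b-a)$ for some $m\in\N\cup\{0\}$, or not. The two branches of $M(x)$ correspond precisely to these two cases, and the matching upper bounds have already been established earlier in the paper, so the work reduces to assembling the pieces and verifying the sharpness claims in both regimes.

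For $x=a+q^m(b-a)$, I would invoke Theorem \ref{tm:OSTqNet}. The bound produced there is $(b-a)\left(\frac{1+2q^{2m+1}}{1+q}-q^m\right)$, and substituting $q^m=\frac{x-a}{b-a}$ rewrites this as the first branch of $M(x)$. This algebraic identity is in fact already recorded in equation \eqref{eq:Nasa} of Remark \ref{rem:protuprimjer}, so no new computation is needed. Sharpness on this branch is furnished by the theorem immediately following Theorem \ref{tm:OSTqNet}, where $f(t)=|t-q^m|$ on $[0,1]$ attains equality; the affine rescaling $t\mapsto a+(b-a)t$ transports this example to an arbitrary $[a,b]$.

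For $x\in[a,b]$ not of this form, Theorem \ref{tm:our_ostrowski} directly yields the bound $(b-a)\left(\frac{x-a}{b-a}+\frac{1}{1+q}\right)$, which is the second branch of $M(x)$. Sharpness on this branch is supplied by the self-similar examples $f_x$ constructed in the sharpness subsection preceding the corollary, where it was explicitly shown that $\left|f_x(x)-\int_0^1 f_x(t)\,d_qt\right|=x+\frac{1}{1+q}$ while $\|D_qf_x\|_\infty^{\langle 0,1]}=1$.

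The only real ``obstacle'' is a bit of bookkeeping: checking that the sharpness examples, which were written on $[0,1]$ for notational convenience, pass to general $[a,b]$. This is immediate because $D_q^a$ and $\int\! d_q^a t$ are covariant under the affine change of variables $t\mapsto a+(b-a)t$, and both branches of $M$ are already expressed in scale-invariant form via $\frac{x-a}{b-a}$. Once the two branches are patched together, one observes that at each point $a+q^m(b-a)$ the first branch evaluates strictly below the second branch, so the resulting $M$ is automatically discontinuous precisely at the $q$-lattice points, as claimed.
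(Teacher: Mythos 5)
Your proposal is correct and follows essentially the same route as the paper, which likewise obtains this corollary by combining Theorem \ref{tm:OSTqNet} (rewritten via $q^m=\frac{x-a}{b-a}$ as in \eqref{eq:Nasa}) on the $q$-lattice with Theorem \ref{tm:our_ostrowski} off the lattice, together with the sharpness examples following each. The extra bookkeeping you mention --- the affine transfer of the $[0,1]$ examples to $[a,b]$ and the strict gap between the two branches at lattice points --- is consistent with what the paper does.
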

	
	\begin{figure}
		\includegraphics[trim={4.5em 0 0 0}]{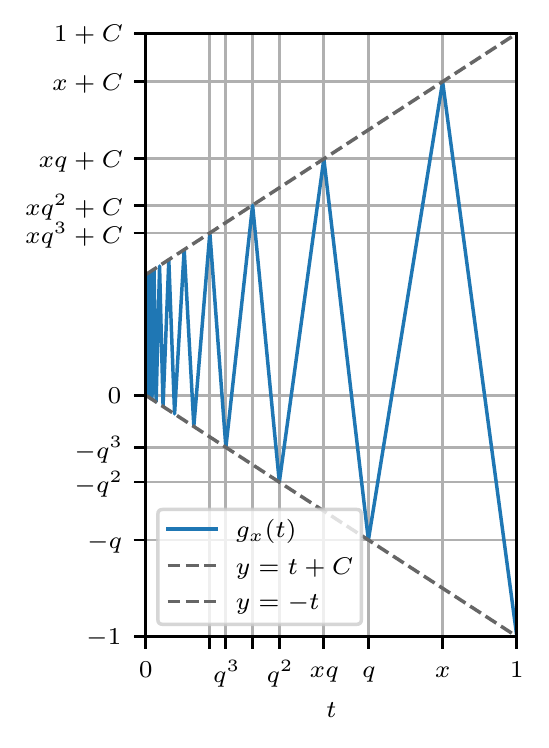}
		\caption{}\label{fig:nonlim}
	\end{figure}

	\begin{remark}
		It is reasonable to ask whether the hypothesis in Theorem \ref{tm:our_ostrowski}, and consequently in the corollary above, could be further relaxed.
		
		It is immediately clear that instead of requiring $f$ to be continuous at $x=a$, it is sufficient to ask for existence of limit $\lim_{x\to a+}f(x)$ but then the claim only holds for $x\in\langle a,b]$. If that is the case, one might as well redefine $f$ at $x=a$ in order to make it continuous.
		
		Relaxing the hypothesis even further is not possible as the following example shows. Assuming $a=0$, $b=1$, construct a piecewise linear map $g_x\colon \langle 0,1] \to \R$ in a similar fashion as $f_x$ was constructed before, but set $g_x(q^n) = -q^n$ and $g_x(xq^n) = xq^n+C$ for some fixed $C>0$, and all $n\in\N\cup\{0\}$, see Figure \ref{fig:nonlim}. Note that $\lim_{x\to 0+}f(x)$ does not exist. Now, using Lemma \ref{lm:derivative_inbetween} below, it can be checked that $|D_q g_x(t)|\le 1$ for $t\in\langle 0,1]$, and $\left|f(x)-\int_0^1f(t)\, d_q t\right|$ can be made arbitrarily large by choosing a large $C$.
	\end{remark}
	
	\subsection{\texorpdfstring{$q$}{q}-differentiable examples}
	One might wonder whether imposing more restrictions on $f$ in Theorem \ref{tm:our_ostrowski} could lead to a better bound. We shall show that asking for $q$-differentiability (at $a$) does not improve things. We do this by constructing $q$-differ\-entiable examples that show the bound in Theorem \ref{tm:our_ostrowski} is best possible even if one considers a class of $q$-differentiable functions. The examples we shall construct will be piecewise linear functions with finitely many pieces. The following lemma will make it easier to bound the $q$-derivative of such a function.
	
	\begin{lemma}\label{lm:derivative_inbetween}
		Let $c,d\in\R$ be such that $0<qc<qd<c<d\le 1$, and assume that $f\colon [0,1] \to \R$ is a function whose values over $[qc,qd]$ are obtained as linear interpolation of values of $f$ at endpoints $qc$ and $qd$; and similarly the values over $[c,d]$ are linear interpolation of values of $f$ at endpoints $c$ and $d$ (see Figure \ref{fig:difspline}).
		
		More precisely, assume that $f(\alpha c + (1-\alpha) d) = \alpha f(c) + (1-\alpha)f(d)$ and $f(\alpha qc + (1-\alpha)qd) = \alpha f(qc) + (1-\alpha)f(qd)$ for all $\alpha\in[0,1]$.
		
		Then, the $q$-derivative of $f$ over $[c,d]$ satisfies the following inequality
		$$\min(D_qf(c),D_qf(d)) \le D_q f(t) \le \max(D_qf(c),D_qf(d))\,,\quad \text{ for all } t\in[c,d].$$
	\end{lemma}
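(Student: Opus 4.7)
The plan is to parametrize a generic point $t\in[c,d]$ as $t=\alpha c+(1-\alpha)d$ with $\alpha\in[0,1]$, and then express both $f(t)$ and $f(qt)$ via the linearity hypothesis. The first uses linearity over $[c,d]$ directly, giving $f(t)=\alpha f(c)+(1-\alpha)f(d)$. The second uses the key observation that $qt=\alpha(qc)+(1-\alpha)(qd)\in[qc,qd]$, so linearity over $[qc,qd]$ yields $f(qt)=\alpha f(qc)+(1-\alpha)f(qd)$.

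Plugging these into the definition $D_q f(t)=\dfrac{f(t)-f(qt)}{(1-q)t}$ and grouping terms, I expect to obtain
$$D_q f(t) = \frac{\alpha\bigl(f(c)-f(qc)\bigr)+(1-\alpha)\bigl(f(d)-f(qd)\bigr)}{(1-q)t}.$$
Using $f(c)-f(qc)=(1-q)c\,D_q f(c)$ and the analogous identity at $d$, this rewrites as
$$D_q f(t) = \frac{\alpha c\, D_q f(c)+(1-\alpha)d\, D_q f(d)}{\alpha c+(1-\alpha)d},$$
i.e.\ a convex combination of $D_q f(c)$ and $D_q f(d)$ with strictly positive weights $\alpha c$ and $(1-\alpha)d$ (both nonnegative since $c,d>0$ and not simultaneously zero). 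Any such weighted average lies between the minimum and maximum of the two values, yielding the claim.

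There is no real obstacle here; the only subtle point worth stressing explicitly is that $qt$ does fall inside $[qc,qd]$ (and not outside of it), which is what allows us to invoke the linearity hypothesis on that interval — this is where the condition $qc<qd<c<d$ ensures the two linear pieces are separated and the argument does not require information about $f$ between $qd$ and $c$. The weighted-average conclusion is then immediate from $c,d>0$.
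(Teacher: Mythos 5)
Your proof is correct and follows essentially the same route as the paper: parametrize $t=\alpha c+(1-\alpha)d$, use the linearity hypothesis on both $[c,d]$ and $[qc,qd]$ to express $D_qf(t)$ as the weighted average $\frac{\alpha c\,D_qf(c)+(1-\alpha)d\,D_qf(d)}{\alpha c+(1-\alpha)d}$, and conclude. Your explicit remark that $qt$ lands inside $[qc,qd]$ is a nice touch that the paper leaves implicit.
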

	
	\begin{figure}
		\includegraphics[trim = {2em 0 0 0 }]{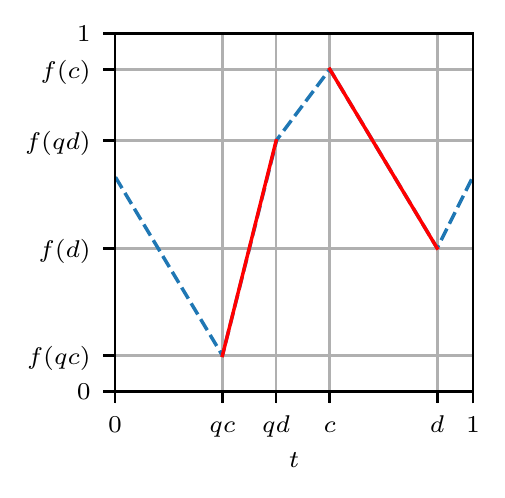}
		\caption{An example of a function for which Lemma \ref{lm:derivative_inbetween} applies. Note that the values of $f$ corresponding to dashed lines are irrelevant and $f$ is not required to be piecewise linear there.}\label{fig:difspline}
	\end{figure}
	
	\begin{proof}
		We first write $t\in[c,d]$ as $t=\alpha c+ (1-\alpha) d$ for some $\alpha\in[0,1]$. We then calculate
		\begin{align*}
		D_qf(t) &= \frac{f(t)-f(qt)}{(1-q)t} = \frac{f(\alpha c+ (1-\alpha) d) - f(\alpha qc+ (1-\alpha) qd)}{(1-q)(\alpha c+ (1-\alpha) d)} \\
		&=\frac{\alpha f(c) + (1-\alpha)f(d) - \alpha f(qc) - (1-\alpha)f(qd)}{(1-q)(\alpha c+ (1-\alpha) d)} \\
		&=\frac{\alpha (f(c) - f(qc)) + (1-\alpha)(f(d) -f(qd))}{(1-q)(\alpha c+ (1-\alpha) d)} \\
		&=\frac{\alpha c D_qf(c) + (1-\alpha) d D_qf(d)}{(\alpha c+ (1-\alpha) d)}
		\end{align*}
		Since the last expression is a weighted average of $D_qf(c)$ and $D_qf(d)$ the statement of the lemma follows.
	\end{proof}
	
	We will now show that for any $x$ not of the form $x=q^m$, and any $\epsilon>0$ we can find a $q$-differentiable function $f_{x,\epsilon}$ which comes $\epsilon$-close to the bound in Theorem \ref{tm:our_ostrowski}, thus showing that the bound in the theorem is best possible.
	
	As before, we shall first show that we can do this for $x\in\langle q, 1 \rangle$. Once $x$ is fixed, let $\epsilon > 0$ be sufficiently small ($\epsilon\le 2x$), and let $f_{x,\epsilon} \colon [0,1] \to \R$ be a function such that $f_{x,\epsilon}(q^n) = -q^n$, and $f_{x,\epsilon}(xq^n) = \max(xq^n-\epsilon,-xq^n)$ for all $n\in\N\cup\{0\}$. Note that $f_{x,\epsilon}(x) = x-\epsilon$.
	
	If we denote by $m\in\N\cup\{0\}$ the integer such that $xq^{m+1} < \frac{\epsilon}{2}\le xq^m$, i.e.\ $m=\lfloor\log_q\frac{\epsilon}{2x}\rfloor$, then $f_{x,\epsilon}(xq^n) = xq^n-\epsilon$ for all $n\le m$ and $f_{x,\epsilon}(xq^n) = -xq^n$ for all integers $n>m$.
	
	Further let $f_{x,\epsilon}(0) = 0$, and for all other $t\in[0,1]$ let $f_{x,\epsilon} (t)$ be defined as the linear interpolation of the previously defined points. Graph of $f_{x,\epsilon}(xq^n)$ is shown in Figure \ref{fig:difexample}.
	
	\begin{figure}
		\begin{center}
			\includegraphics[trim = {3.2em 0 0 0 }]{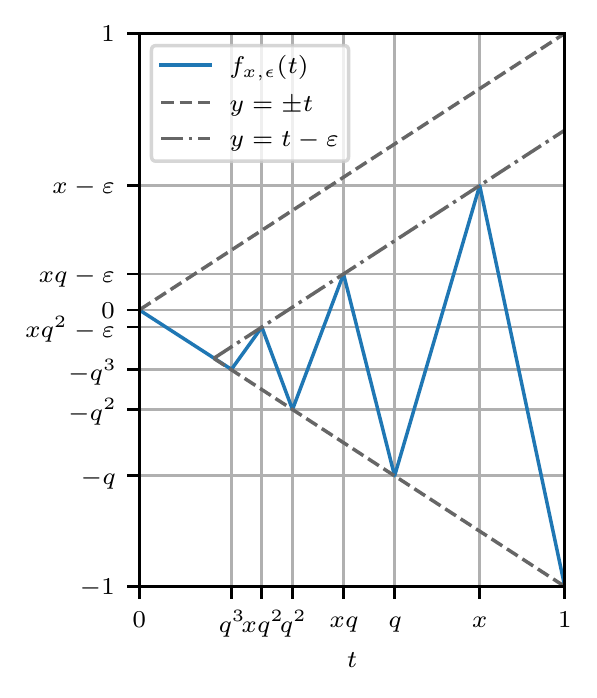}\hfil\includegraphics[trim = {2em 0 0 0 }]{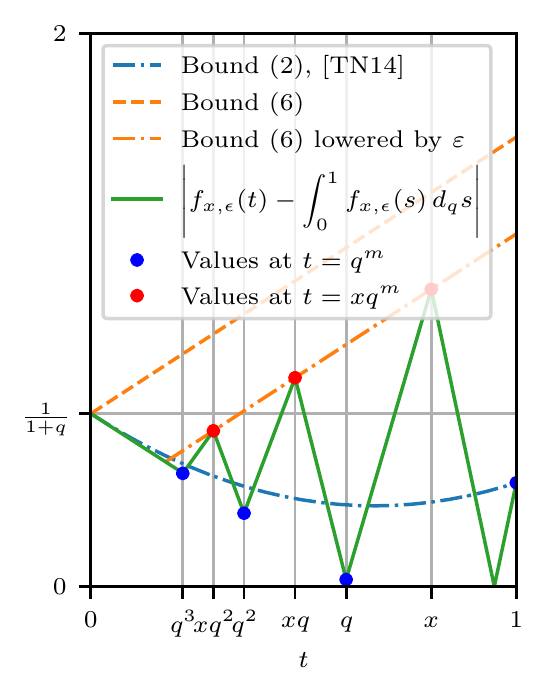}
		\end{center}
		\caption{A $q$-differentiable example showing that the bound in Theorem \ref{tm:our_ostrowski} is best possible. Here $q=0.6$, $x=0.8$, and $f_{x,\epsilon}$ comes $\epsilon$-close to the bound at all $t=xq^m$. Again, the incorrect bound \eqref{OST2} does hold for $t$ of the form $t=q^m$.}\label{fig:difexample}
	\end{figure}
	
	\begin{proof}[Claim 1] $f_{x,\epsilon}$ is continuously $q$-differentiable over $[0,1]$.
		
		Note that $f_{x,\epsilon}(t) = -t$ for all $t\in[0,q^{m+1}]$, and $f_{x,\epsilon}$ is therefore differentiable at $t=0$. This further means that $f_{x,\epsilon}$ is continuously $q$-differentiable over $[0,1]$ as $f_{x,\epsilon}$ itself is continuous.
	\end{proof}
	
	\begin{proof}[Claim 2] $f_{x,\epsilon}$ is $q$-integrable on $[0,1]$ with $\di\int_{0}^{1}f_{x,\epsilon}(t) \, d_qt  = -\frac{1}{1+q}$.
		
		The values $f_{x,\epsilon}$ match those of $f_x$ at $t=q^n$ for all $n\in\N\cup\{0\}$. Hence,
		$$\int_{0}^{1}f_{x,\epsilon}(t) \, d_qt  = \int_{0}^{1}f_x(t) \, d_qt = -\frac{1}{1+q}.$$
	\end{proof}
	
	\begin{proof}[Claim 3] $|D_q f_{x,\epsilon}(t)| \le 1$ for all $t\in[0,1]$.
		
		To show this we employ Lemma \ref{lm:derivative_inbetween}. It is follows immediately from the definition of $f_{x,\epsilon}$ that $D_q f_{x,\epsilon}(q^n) = -1$ for all $n\in\N\cup \{0\}$. Also, $D_q f_{x,\epsilon}(xq^n) = 1$ for all $n\le m-1$, and $D_q f_{x,\epsilon}(xq^n) = -1$ for $n \ge m+1$. The only problematic point is $t=xq^m$ so we calculate
		\begin{align*}
		D_q f_{x,\epsilon}(xq^{m}) &= \frac{f_{x,\epsilon}(xq^{m}) - f_{x,\epsilon}(xq^{m+1})}{(1-q)xq^{m}} = \frac{xq^{m}-\epsilon-(-xq^{m+1})}{(1-q)xq^{m}} = \frac{1+q-\frac{\epsilon}{xq^{m}}}{1-q}.
		\end{align*}
		
		Taking into account that $xq^{m+1} < \frac{\epsilon}{2}\le xq^m$, it follows that $2q <\frac{\epsilon}{xq^{m}} \le 2$, and hence $D_q  f_{x,\epsilon}(xq^{m}) \in [-1,1]$.
		
		To sum up, we have shown that $D_q  f_{x,\epsilon} (t) \in [-1,1]$ for all $t$ that are equal to $q^n$ or $xq^n$ for some $n\in\N\cup\{0\}$. Since any other $t\in \langle 0,1 \rangle$ lies inside some interval $[c=xq^n, d=q^n]$ or inside $[c=q^{n+1}, d=xq^n]$ for some $n$, and since the endpoints of interval $[qc,qd]$ are again two neighbouring points of the same form, and since $f_{x,\epsilon}$ is defined to be linear interpolation of values at those endpoints --- all the conditions of Lemma \ref{lm:derivative_inbetween} are met. We can therefore conclude that $\left| D_q  f_{x,\epsilon} (t)\right|\le 1$ for all $t\in[0,1]$.
	\end{proof}
	
	It now only remains to note that
	$$\left|f_{x,\epsilon}(x) - \int_{0}^{1}f_{x,\epsilon}(t) \, d_qt\right| = \left|x - \epsilon - \left( -\frac{1}{1+q}\right) \right| = x + \frac{1}{1+q} - \epsilon$$
	which completes the proof that there are $q$-differentiable functions $f_{x,\epsilon}$ that come arbitrarily close to the bound at $x$ for any $x\in\langle q,1 \rangle$.
	
	As before, if $x \not\in \langle q,1 \rangle$, we first find $n\in\N$ such that $q^{n+1} < x < q^n$ and set $\tilde{x} = \dfrac{x}{q^n}$ so that $\tilde{x} \in \langle q,1 \rangle$. Then choose $\epsilon>0$ sufficiently small so that $\max(\tilde{x}q^n-\epsilon,-\tilde{x}q^n) = \tilde{x}q^n-\epsilon$, i.e.\ $\epsilon\le 2x$. Now for the function $f_{\tilde{x},\epsilon}$ as above we have
	\begin{multline*}
	\left|f_{\tilde{x},\epsilon}(x) - \int_{0}^{1}f_{\tilde{x},\epsilon}(t) \, d_qt\right| = \left|f_{\tilde{x},\epsilon}(\tilde{x}q^n) - \left( -\frac{1}{1+q}\right) \right| = \tilde{x}q^n-\epsilon+ \frac{1}{1+q} = \\
	= x+\frac{1}{1+q}-\epsilon
	\end{multline*}
	so $f_{\tilde{x},\epsilon}$ comes arbitrarily close to the bound at $x$.
	
	\begin{remark}
		In the light of the previous examples, one could ask whether there are any $q$-differentiable functions that achieve the bound \eqref{ineq:ostrowski_ineq} exactly at some $x$ not of the form $x=a+q^m(b-a)$. We do not know of such examples and conjecture that any such an example will fail to be $q$-differentiable at $a$.
	\end{remark}

	\section*{Acknowledgments}
	Domagoj Kovačević was supported by the QuantiXLie Centre of Excellence, a project
	co financed by the Croatian Government and European Union through the
	European Regional Development Fund - the Competitiveness and Cohesion
	Operational Programme (Grant KK.01.1.1.01.0004).
	
% \bib, bibdiv, biblist are defined by the amsrefs package.
\begin{bibdiv}
	\begin{biblist}
		
		\bib{ANNA}{book}{
			author={Annaby, Mahmoud~H.},
			author={Mansour, Zeinab~S.},
			title={{$q$}-fractional calculus and equations},
			series={Lecture Notes in Mathematics},
			publisher={Springer, Heidelberg},
			date={2012},
			volume={2056},
			ISBN={978-3-642-30897-0},
			url={https://doi.org/10.1007/978-3-642-30898-7},
			note={With a foreword by Mourad Ismail},
			review={\MR{2963764}},
		}
		
		\bib{AhasanMursaleen}{article}{
			author={Ahasan, Mohd.},
			author={Mursaleen, M.},
			title={Generalized {S}z\'{a}sz-{M}irakjan type operators via
				{$q$}-calculus and approximation properties},
			date={2020},
			ISSN={0096-3003},
			journal={Appl. Math. Comput.},
			volume={371},
			pages={124916, 13},
			url={https://doi.org/10.1016/j.amc.2019.124916},
			review={\MR{4043395}},
		}
		
		\bib{IMCA}{book}{
			author={Anastassiou, George~A.},
			title={Intelligent mathematics: computational analysis},
			series={Intelligent Systems Reference Library},
			publisher={Springer-Verlag, Berlin},
			date={2011},
			volume={5},
			ISBN={978-3-642-17097-3},
			url={https://doi.org/10.1007/978-3-642-17098-0},
			review={\MR{2905440}},
		}
		
		\bib{FOP}{article}{
			author={Almeida, Ricardo},
			author={Torres, Delfim F.~M.},
			title={Leitmann's direct method for fractional optimization problems},
			date={2010},
			ISSN={0096-3003},
			journal={Appl. Math. Comput.},
			volume={217},
			number={3},
			pages={956\ndash 962},
			url={https://doi.org/10.1016/j.amc.2010.03.085},
			review={\MR{2727133}},
		}
	
		\bib{BPST}{article}{
			author={Baiarystanov, A.~O.},
			author={Persson, L.~E.},
			author={Shaimardan, S.},
			author={Temirkhanova, A.},
			title={Some new {H}ardy-type inequalities in {$q$}-analysis},
			date={2016},
			ISSN={1846-579X},
			journal={J. Math. Inequal.},
			volume={10},
			number={3},
			pages={761\ndash 781},
			url={https://doi.org/10.7153/jmi-10-62},
			review={\MR{3565151}},
		}
		
		\bib{NLFDE}{article}{
			author={Chen, Fulai},
			author={Luo, Xiannan},
			author={Zhou, Yong},
			title={Existence results for nonlinear fractional difference equation},
			date={2011},
			ISSN={1687-1839},
			journal={Adv. Difference Equ.},
			pages={Art. ID 713201, 12},
			url={https://doi.org/10.1155/2011/713201},
			review={\MR{2747089}},
		}
		
		\bib{qCalcComprehensive}{book}{
			author={Ernst, Thomas},
			title={A comprehensive treatment of {$q$}-calculus},
			publisher={Birkh\"{a}user/Springer Basel AG, Basel},
			date={2012},
			ISBN={978-3-0348-0430-1; 978-3-0348-0431-8},
			url={https://doi.org/10.1007/978-3-0348-0431-8},
			review={\MR{2976799}},
		}
		
		\bib{FitouhiBrahim}{article}{
			author={Fitouhi, Ahmed},
			author={Brahim, Kamel},
			title={Some inequalities for the {$q$}-beta and the {$q$}-gamma
				functions via some {$q$}-integral inequalities},
			date={2008},
			ISSN={0096-3003},
			journal={Appl. Math. Comput.},
			volume={204},
			number={1},
			pages={385\ndash 394},
			url={https://doi.org/10.1016/j.amc.2008.06.055},
			review={\MR{2458377}},
		}
		
		\bib{IIQC}{article}{
			author={Gauchman, H.},
			title={Integral inequalities in {$q$}-calculus},
			date={2004},
			ISSN={0898-1221},
			journal={Comput. Math. Appl.},
			volume={47},
			number={2-3},
			pages={281\ndash 300},
			url={https://doi.org/10.1016/S0898-1221(04)90025-9},
			review={\MR{2047944}},
		}
		
		\bib{HighOrderSingularBVP}{article}{
			author={Graef, John~R.},
			author={Kong, Lingju},
			title={Existence of positive solutions to a higher order singular
				boundary value problem with fractional {$Q$}-derivatives},
			date={2013},
			ISSN={1311-0454},
			journal={Fract. Calc. Appl. Anal.},
			volume={16},
			number={3},
			pages={695\ndash 708},
			url={https://doi.org/10.2478/s13540-013-0044-5},
			review={\MR{3071209}},
		}
		
		\bib{JACK}{article}{
			author={Jackson, F.~H.},
			title={On q-functions and a certain difference operator},
			date={1909},
			journal={Transactions of the Royal Society of Edinburgh},
			volume={46},
			number={2},
			pages={253–281},
		}
		
		\bib{JACK2}{article}{
			author={Jackson, F.~H.},
			title={On q-definite integrals},
			date={1910},
			journal={Quart. J. Pure Appl. Math},
			number={41},
			pages={193\ndash 203},
		}
		
		\bib{KAC}{book}{
			author={Kac, Victor},
			author={Cheung, Pokman},
			title={Quantum calculus},
			series={Universitext},
			publisher={Springer-Verlag, New York},
			date={2002},
			ISBN={0-387-95341-8},
			url={https://doi.org/10.1007/978-1-4613-0071-7},
			review={\MR{1865777}},
		}
		
		\bib{MezliniOuled}{article}{
			author={Mezlini, Kamel},
			author={Ouled~Azaiez, Najib},
			title={Quantum algebra from generalized {$q$}-{H}ermite polynomials},
			date={2019},
			ISSN={0022-247X},
			journal={J. Math. Anal. Appl.},
			volume={480},
			number={1},
			pages={123357, 22},
			url={https://doi.org/10.1016/j.jmaa.2019.07.047},
			review={\MR{3994909}},
		}
		
		\bib{NoorAwan}{article}{
			author={Noor, Muhammad~Aslam},
			author={Noor, Khalida~Inayat},
			author={Awan, Muhammad~Uzair},
			title={Some quantum integral inequalities via preinvex functions},
			date={2015},
			ISSN={0096-3003},
			journal={Appl. Math. Comput.},
			volume={269},
			pages={242\ndash 251},
			url={https://doi.org/10.1016/j.amc.2015.07.078},
			review={\MR{3396773}},
		}
		
		\bib{OSTR}{article}{
			author={Ostrowski, Alexander},
			title={\"{U}ber die {A}bsolutabweichung einer differentiierbaren
				{F}unktion von ihrem {I}ntegralmittelwert},
			date={1937},
			ISSN={0010-2571},
			journal={Comment. Math. Helv.},
			volume={10},
			number={1},
			pages={226\ndash 227},
			url={https://doi.org/10.1007/BF01214290},
			review={\MR{1509574}},
		}
		
		\bib{AFC}{book}{
			editor={Sabatier, J.},
			editor={Agrawal, O.~P.},
			editor={Machado, J. A.~Tenreiro},
			title={Advances in fractional calculus},
			publisher={Springer, Dordrecht},
			date={2007},
			ISBN={978-1-4020-6041-0},
			url={https://doi.org/10.1007/978-1-4020-6042-7},
			note={Theoretical developments and applications in physics and
				engineering, Including papers from the Minisymposium on Fractional
				Derivatives and their Applications (ENOC-2005) held in Eindhoven, August
				2005, and the 2nd Symposium on Fractional Derivatives and their Applications
				(ASME-DETC 2005) held in Long Beach, CA, September 2005},
			review={\MR{2432163}},
		}
		
		\bib{TARI}{article}{
			author={Tariboon, Jessada},
			author={Ntouyas, Sotiris~K.},
			title={Quantum calculus on finite intervals and applications to
				impulsive difference equations},
			date={2013},
			ISSN={1687-1839},
			journal={Adv. Difference Equ.},
			pages={2013:282, 19},
			url={https://doi.org/10.1186/1687-1847-2013-282},
			review={\MR{3213896}},
		}
		
		\bib{TARI2}{article}{
			author={Tariboon, Jessada},
			author={Ntouyas, Sotiris~K.},
			title={Quantum integral inequalities on finite intervals},
			date={2014},
			journal={J. Inequal. Appl.},
			pages={2014:121, 13},
			url={https://doi.org/10.1186/1029-242X-2014-121},
			review={\MR{3253873}},
		}
		
	\end{biblist}
\end{bibdiv}
	
\end{document}